\def\cP{\mathcal P}
\def\cX{\mathcal X}
\def\tY{\tilde Y}
\newtheorem{thm}{Theorem} 
\newtheorem*{thm*}{Theorem}
\newtheorem*{prop*}{Proposition}
\newtheorem{cor}[thm]{Corollary}
\newtheorem*{cor*}{Corollary}
\newtheorem{lem}[thm]{Lemma}
\newtheorem*{lem*}{Lemma}
\newtheorem*{claim*}{Claim}
\newtheorem{prop}[thm]{Proposition}
\newtheorem{defn}[thm]{Definition}
\theoremstyle{remark}
\newtheorem{rem}[thm]{Remark}
\newtheorem*{rem*}{Remark}
\newtheorem{crit-rem}[thm]{Critical remark}
\newtheorem{remarks}[thm]{Remarks}
\newtheorem{example}[thm]{Example}
\newtheorem*{example*}{Example}
\newtheorem*{defn*}{Definition}
\def\inv{^{-1}}
\def\refp #1.{(\ref{#1})}
\newcommand\carets [1]{\langle #1 \rangle}
\newcommand{\A}{\mathcal{A}}
\newcommand{\M}{\mathcal{M}}
\newcommand{\cM}{\mathcal M}
\newcommand{\Cal}[1]{\mathcal #1}
\newcommand{\ul}[1]{\underline {#1}}
\def\sbr #1.{^{[#1]}}
\def\sfl #1.{^{\lfloor #1\rfloor}}
\def\inv{^{-1}}
\def\?{{\bf{??}}}
\def\H{\mathcal H}
\def\M{\Cal M}
\def\A{\Bbb A}
\def\C{\mathbb C}
\def\P{\mathbb P}
\def\sym{\text{\rm Sym} }
\def\O{\mathcal O}
\def\rk{\text{rk}}
\def\g{\mathfrak g}
\def\1/2{\frac{1}{2}}
\def\Sec{\textrm{Sec}}
\def\2{{[2]}}
\def\nl{\newline}
\def\<{\langle}
\def\>{\rangle}
\def\2{{[2]}}
\def\scl #1.{^{\lceil#1\rceil}}
\def\spr #1.{^{(#1)}}
\def\sbc #1.{^{\{#1\}}}
\def\subpr#1.{_{(#1)}}
\def\beq{\begin{equation*}}
\def\eeq{\end{equation*}}
\def\g3{{\Gamma\spr 3.}}
\newcommand{\eqspl}[2]{
\begin{equation}\label{#1}
\begin{split}
#2\end{split}\end{equation}}
\newcommand{\exseq}[3]{
0\to #1\to #2\to #3\to 0
}
\newcommand{\beginalphaenum}{
\begin{enumerate}\renewcommand{\labelenumi}{ }
\item \begin{enumerate}
}
\def\eex{\end{rm}\end{example}}
\begin{document} 

\title{Low-degree Rational curves on hypersurfaces\\
 in projective spaces  and their fan degenerations}
\author 
{Ziv Ran}


\thanks{arxiv.org 1906.03747}
\date {\today}


\address {\nl UC Math Dept. \nl
Skye Surge Facility, Aberdeen-Inverness Road
\nl
Riverside CA 92521 US\nl 
ziv.ran @  ucr.edu\nl
\url{http://math.ucr.edu/~ziv/}
}

 \subjclass[2010]{14n25, 14j45, 14m22}
\keywords{projective hypersurfaces, rational curves, degeneration methods}

\begin{abstract}
	We study rational curves on general Fano hypersurfaces
	in projective space, mostly by degenerating
	the hypersurface along with its ambient projective space to reducible varieties.
We prove results on existence of low-degree rational curves with balanced normal bundle, 
 and reprove some results on irreducibility of spaces of rational curves of low degree.
\end{abstract}
\maketitle
\tableofcontents
\section*{Introduction}
The celebrated work of Mori starting with \cite{mori-ample-tangent} had brought out the importance of rational curves
in higher-dimensional geometry, and
in the early 1990s Campana 
and Koll\'ar-Miyaoka-Mori 
applied
Mori's bend-and-break method to prove that the family of rational curves
on any Fano manifold $X$ is a large enough  to connect a pair of general points
(i.e. $X$ is 'rationally connected'). 
This work and much more is exposed in Koll\'ar's book  \cite{kollar-rat-curves}.
 Since then there has been considerable interest, especially
by Joe Harris and his school (e.g. \cite{harris-roth-starr}), in studying the family of rational curves on a general
Fano hypersurface in projective space, especially as to dimension and irreducibility.
The expected dimension of the family of rational curves of degree $e$ in a hypersurface
$X$ of degree $d$ in $\P^n$ is $e(n-d+1)+n-4$, and it has
been conjectured that when $X$ is general, this family is irreducible and of the expected dimension
provided $d\leq n-1$ and $n>3$. This conjecture has  been proven for $d\leq n-2$ by Riedl and Yang
\cite{riedl-yang} which also contains extensive references. See also \cite{tseng-note} for a partial extension
to the case $d=n-1$. \par
The purpose of this paper is to bring to bear on these questions a technique used previously
\cite{sing},\cite{enriques} to study curves (of any geometric genus) in the projective plane.
This technique is based on degenerating the ambient projective space itself to a reducible
variety called a \emph{fan} and simultaneously degenerate a general hypersurface in projective
space to a general hypersurface of suitable type on the fan. In a sense, one is degenerating the entire
family of projective hypersurfaces of given degree to an analogous family- of the same dimension-
on a fan. This has the advantage that the components of the limit are simpler, consisting,
 e.g. in the case of a 2-component fan, of a  hypersurface of degree
 $d-1$ and a blowup of a (rational) hypersurface of degree $d$ with a point of  multiplicity $d-1$,
called a quasi-cone. Moreover the union has double points only.
We call this a 2-fan of type $(d, d_2)$. We will focus mainly on the case $d_2=d-1$, where
$X_1$ is a blowup of a special rational hypersurface called a quasi-cone and is in turn the blowup of $\P^{n-1}$
in a $(d, d-1)$ complete intersection.
\par
Now the limit on a 2-component fan $X_1\cup X_2$ of a rational curve of degree $e$ on a general
hypersurface of degree $d$ essentially takes the form $C_1\cup C_2$ 
where $C_1$ is a birational transform of a rational curve of degree $e_1$ in $\P^{n-1}$ that is
$(e_1(d-1)-e_2)$-secant to a certain $(d, d-1)$ complete intersection, while $C_2$ is a rational
curve of degree $e_2\geq 0$ on $X_2$, a general hypersurface of degree $d-1$ and $e_1+e_2=e$. 
This leads us to study
secant rational curves to complete intersections. 
We will show in Thm  \ref{secant-thm} that for fixed $d, e, a$ in a suitable range,  the locus of rational curves
of degree $e$ that are $a$-secant to a general $(d, d-1)$ complete intersection is irreducible
and of the expected dimension. This will be used to reprove 
 a result (Thm \ref{main}) on irreducibility of families
of rational curves  of low degree on general hypersurfaces 
of degree $d<n$ in $\P^n$. 
This result is mostly subsumed by the theorem of Riedl-Yang and, as noted
by the referee, can also be deduced from the results of Harris-Roth-Starr
\cite{harris-roth-starr}. Those proofs are different.
It is interesting
to note that for the bend-and-break method as used in \cite{riedl-yang} and elsewhere, 
the case of low-degree curves is hardest, but for our method the opposite is true.
Also, in \S \ref{through-pt}, we prove 
some results on irreduciblity of the family of rational
curves going through a fixed, general point.
\par
Our main new result (Theorem \ref{balanced-high})
concerns the existence of rational curves of low 
degree with balanced normal bundle, 
and extends
a result of Coskun-Riedl \cite{coskun-riedl}:  
\begin{thm*}
	For $d\leq n, n\geq 4, e\leq 2n-2$ there exists on a general hypersurface
	of degree $d$ in $\P^n$ a rational curve of degree $e$ with 
	balanced normal bundle.
	\end{thm*}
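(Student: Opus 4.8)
The plan is to prove the existence of a balanced rational curve by the fan-degeneration method that drives the whole paper, reducing the statement for degree-$d$ hypersurfaces in $\P^n$ to balanced-curve statements on the two components of a $2$-fan of type $(d,d-1)$, where one piece is a degree-$(d-1)$ hypersurface and the other is a blown-up quasi-cone. First I would recall that a rational curve $C$ on a smooth hypersurface $X\subset\P^n$ has balanced normal bundle $N_{C/X}$ precisely when the splitting type $\bigoplus\O(a_i)$ has all $a_i$ differing by at most $1$; since $\deg N_{C/X}=e(n-d+1)-2$ and $\rk N_{C/X}=n-2$, balancedness is an open condition, so it suffices to exhibit a single balanced curve in the limit and then deform. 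The strategy is therefore to build the limit curve $C_1\cup C_2$ on the fan $X_1\cup X_2$ so that each piece is balanced in its own component and the two normal bundles glue, across the nodes on the double divisor $D=X_1\cap X_2$, to a balanced bundle on the smoothing.

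\medskip

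The key steps, in order, are as follows. First I would set up the fan degeneration $\cX\to\Delta$ with special fiber $X_1\cup X_2$ and identify the limit normal bundle of $C_1\cup C_2$ inside $\cX$ via the standard exact sequence relating $N_{C/\cX_t}$ to $N_{C_1/X_1}$, $N_{C_2/X_2}$, and the gluing data at the nodes; the node-scroll and blowup analysis referenced earlier lets one compute how a node contributes a twist. Second, I would choose the degree split $e=e_1+e_2$ and the secancy parameter $a$ so that $C_1$ is a balanced rational curve of degree $e_1$ in $\P^{n-1}$ meeting the $(d,d-1)$ complete intersection in $a=e_1(d-1)-e_2$ points while $C_2$ is a balanced rational curve of degree $e_2$ on the general degree-$(d-1)$ hypersurface $X_2$. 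The irreducibility and expected-dimension statement for such secant curves, Theorem~\ref{secant-thm}, guarantees that these configurations exist and move in a family of the right dimension, which is what lets me pick a general, and hence sufficiently balanced, representative. Third, an induction on $d$ handles the balancedness of $C_2$ on $X_2$ (a hypersurface of one lower degree), with a base case where the hypersurface degree is small enough that balanced rational curves are already known (e.g. from Coskun--Riedl \cite{coskun-riedl} or from a direct rational-normal-curve computation). The range $e\le 2n-2$ is exactly what is needed to keep each $e_i$ within the regime where a balanced curve exists on the lower piece and where the secant count $a$ stays nonnegative and bounded by the intersection-theoretic maximum.

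\medskip

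The main obstacle I expect is controlling the normal bundle across the nodes: even when $N_{C_1/X_1}$ and $N_{C_2/X_2}$ are each balanced, the glued bundle on the smoothing can acquire an unbalancing twist from the gluing/matching condition at the $a$ intersection points, since the normal direction transverse to $D$ behaves differently from the directions tangent to it. The delicate point is to show that the limit sheaf $N_{C/\cX}$ specializes from a locally free balanced bundle, i.e.\ that the curve $C_1\cup C_2$ is \emph{smoothable} with its normal bundle deforming flatly to a balanced one rather than jumping. I would handle this by a careful local computation at each node, distinguishing the sub-bundle of $N_{C/\cX}$ coming from deformations within the fibers from the $\O(1)$-twist coming from smoothing the node, and then verifying that the resulting degrees on the two "halves" $C_1$ and $C_2$ can be balanced by the freedom in choosing $e_1,e_2$ and the attaching points. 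Once the limit normal bundle is shown to be balanced (or at worst to have a balanced generization), openness of balancedness and the irreducibility of the parameter space from Theorem~\ref{main} promote the single limit example to a balanced curve on the general smooth hypersurface of degree $d$, completing the argument.
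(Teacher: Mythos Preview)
Your framework—fan degeneration to a $(d,d-1)$ $2$-fan, induction on $d$, openness of balancedness—is the paper's. But your limit curve has a topological defect that the paper's construction is designed to avoid, and your gluing step is missing the mechanism that actually does the work.

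You take $C_1\subset X_1$ with secancy $a=e_1(d-1)-e_2$, hence $C_1.E=e_2$, and $C_2\subset X_2$ of degree $e_2$, hence also $C_2.E=e_2$, and glue them along $E$. If $e_2\ge 2$ this produces a curve with two irreducible components meeting in $e_2$ nodes, so of arithmetic genus $e_2-1>0$: it is not a rational tree and cannot be the Kontsevich limit of smooth rational curves. The paper avoids this by taking $a=e_1(d-1)$ so that $C_1$ is \emph{disjoint} from $E$, and then bridging to $C_2$ via rulings $R_1,\dots,R_{e_2}$ of the exceptional $\P^1$-bundle $\tilde Y\subset X_1$, one through each point of $C_2\cap E$, with $C_1$ meeting only $R_1$. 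The resulting $C_1\cup\bigcup R_i\cup C_2$ is a genus-$0$ generalized comb. Balancedness of its normal bundle is then established not by a local node computation but via the purely combinatorial Lemmas~\ref{balanced-lem} and~\ref{comb-lem} on bundles over chains and combs: each ruling contributes a strongly balanced piece of upper corank $1$, exactly the hypothesis of the comb lemma, and the tooth-count constraint there is what dictates the fixed choice $e_2=n-2$. Together with $e_1\le n$ (the range of Lemma~\ref{secant-balanced-lem}) this yields $e\le 2n-2$. The bound is produced by the numerics of the comb lemma, not by bookkeeping on a free split $e=e_1+e_2$.

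Two smaller corrections. On the $X_1$ side the relevant input is Lemma~\ref{secant-balanced-lem} (balancedness of the secant bundle for $e_1\le n$), not Theorem~\ref{secant-thm}, which is an irreducibility statement carrying the tighter hypothesis $e\le d-1$. And Theorem~\ref{main} plays no role here: once you have one balanced curve on one smooth fibre, $H^1(N)=0$ lets it deform to the general hypersurface; no global irreducibility of the family is needed.
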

The detailed contents of the paper are as follows.\begin{itemize}\item 
We begin in \S1 by studying
certain (decomposable) bundles on rational chains and combs which smooth out to
balanced bundles on rational curves.\item
 This study is applied in \S2 to 
polygons on a union of hyperplanes to give a preliminary construction
for rational curves of low degree with balanced normal bundles.
\item In \S3 we study certain rational
projective hypersurfaces called quasi cones which arise,
essentially as componenets, in the study
of hypersurfaces on fans. \item
Fans  themselves
 along with their deformations and hypersurfaces on them
 are introduced in \S 4. \item In \S5 we prove a technical result concerning what happens
to a curve on a general hypersurface in projective space as the latter
degenerates to a fan. 
\item In \S6 we study families of
curves in a given homology
class on a quasi-cone. Using the identification of the quasi-cone
with a blowup of projective space, these can be identified with families of curves
in projective space that are multisecant to a given codimension-2
complete intersection.\item The results on balanced normal bundles 
and irreducibility of families are then proved in \S7 and \S8
using fans.\end{itemize}
Throughout the paper, we work over $\C$.
\vskip.35cm
I thank the referee for many helpful comments on both the micro and macro levels.
\section{Balanced bundles on chains and combs}

Recall that a  vector bundle $E$ on a rational curve  $C$ is said to be balanced
if
\[E\simeq r_+\O_C(a+1)\oplus (r-r_+)\O_C(a), r_+>0\] 
where $\O_C(*)$ denotes the unique line bundle
of degree $*$.
Balancedness is an open property on vector bundles. The subbundle $E_+=r_+\O_C(a+1)$ is uniquely
determined and called the \emph{upper} subbundle. 
We can write $E_+=V_+(E)\otimes\O_C(a+1)$ where the vector space $V_+(E)$
is canonical up to scalar multiplication and for any point $p\in C$,
$V_+(E)$ may be identified, again canonically up to scalar,
with a uniquely determined subspace of the  fibre $E(p)$, called the
\emph{upper subspace} at $p$.  \par
Now let \[C=C_1\cup...\cup C_e\] 
be a (connected, nodal) tree of smooth
rational curves. 
We denote by  $\O_C(a_1,..., a_e)$ any line bundle
on $C$ whose restriction on each $C_j$ is $\O_{C_j}(a_j)$.
A  \emph{balanced decopmosition} for a vector bundle $E$ on $C$
is an isomorphism to a direct sum of line bundles
whose total degrees differ by at most 1, i.e.
\eqspl{upper}
{E\simeq \bigoplus\limits_{i=1}^{r_+}\O_{C}(a_{i1},..., a_{ie})
\oplus  \bigoplus\limits_{i=1}^{r-r_+}\O_{C}(b_{i1},...,b_{ie})
}
where \[\sum\limits_{j=1}^e a_{ij}=:\deg_+(E), i=1,...,r_+,\sum\limits_{j=1}^e b_{ij}=\deg_+(E)-1, i=1,...,r-r_+.\]
The first sum above is called an upper subbundle of $E$.
 Thus $r(\deg_+(E)-1)+r_+=\deg(E).$
Then $\deg_+(E)$ is called the upper degree and $r_+=r_+(E)$ is called the upper rank.
A vector bundle $E$ on a rational tree $C$ is said to be \emph{balanced}
if its restriction on any connected subtree 
admits a balanced decomposition.
A bundle $E$ on a rational tree
 is said to be \emph{strongly balanced} if it is balanced and there is a subbundle
$E_+\subset E$- necessarily unique-
 that restricts on each component $C_i$ to the upper subbundle
of $E_i$.\par 
\begin{remarks}

	\par
(i)	Clearly every strongly balanced bundle is balanced
with upper degree and rank being the degree and rank of $E_+$. 
Indeed strongly balanced bundles behave just like balanced bundles on $\P^1$ and
in particular there is a well-defined upper subspace $V_+(E)$.
\par (ii) Unlike in the irreducible case, 
not every balanced bundle is strongly balanced
because an 'upper' subbundle is in general not unique when
some of the $a_{ij}$ may not equal the upper degree of $E_j$. \par
(iii) Another subtlety of weakly (non-strongly) balanced bundles
is that 'upper' line subbundles may be rigid and it is not in
general possible to form linear combinations of them.\par
(iv) The condition that just $E$ itself,
rather than all its restrictions, admit a{balanced} decomposition
seems too weak. In particular Corollary \ref{smoothing} 
below seems unlikely to hold in that
generality.\par
(v) Balancedness is defined only for vector bundles which split
as direct sums of line bundles and it is \emph{not} claimed that any vector bundle on a rational chain splits thusly.
\end{remarks}
We now consider a connected chain of rational trees
\[C=C_1\cup...\cup C_e\]
where each link $C_i$
is itself a rational tree (rather than a $\P^1$) and where
each $E_i=E|_{C_i}$ is strongly balanced. Set $p_i=C_i\cap C_{i+1}$.
Note that if each $E_i$ is strongly balanced then $V_+(E_i)$, 
being a subspace of both $E_i(p_{i-1})$ and $E_i(p_i)$,  may be 
intersected with both $V_+(E_{i-1})$ and $V_+(E_{i+1})$.
Repeating this operation we obtain a pair of descending flags, 'forward' and 'backward' on $V_+(E_i)$.
We will say that $E$ is in \emph{ balanced general position} if
 these flags each have the generic dimension (e.g. the first member of the forward flag
 is of dimension $\max(r_+(E_i)+r_+(E_{i+1})-r, 0)$ etc.) and the pair is in mutual
general position, $\forall i$.
\begin{lem}\label{balanced-lem}
	Let $C$ be a rational tree of the form $C=C_1\cup...\cup C_e$
where each $C_i$ is itself a rational tree.
Let $E=E_1\cup...\cup E_e$ be a vector bundle $C$ such that each restriction
 $E_i=E|_{C_i}$ 
 is strongly balanced and such that $E$ is in balanced  general position. 
 Then $E$ is balanced. 
\end{lem}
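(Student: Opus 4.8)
The plan is to induct on the number $e$ of links, the base case $e=1$ being immediate since a strongly balanced bundle is balanced by Remark (i). The engine of the whole argument is the two-link case, which I would isolate first as a linear-algebra computation at a single node and then propagate down the chain. So suppose first $C=C_1\cup C_2$ glued at $p=p_1$, and abbreviate $r_+^j=r_+(E_j)$, $\delta_j=\deg_+(E_j)$. Because each $E_j$ is strongly balanced, $E_{j,+}=V_+(E_j)\otimes\O_{C_j}(\delta_j)$, and a line subbundle of $E_j$ of top degree $\delta_j$ with prescribed fibre $\langle v\rangle$ at $p$ exists precisely when $v\in V_+(E_j)$. To build a balanced decomposition of the glued bundle $F$ I would choose a basis $v_1,\dots,v_r$ of the common fibre $E_1(p)\cong E_2(p)$ adapted to the two subspaces: first $k:=\dim\bigl(V_+(E_1)\cap V_+(E_2)\bigr)$ vectors inside the intersection, then extend inside $V_+(E_1)$, then inside $V_+(E_2)$, then arbitrarily. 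Through each $v_m$ one threads a line subbundle on each side and glues them.

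The degree bookkeeping then runs as follows. The $k$ vectors in $V_+(E_1)\cap V_+(E_2)$ give ``upper--upper'' summands of total degree $\delta_1+\delta_2$; the $r_+^1-k$ and $r_+^2-k$ further vectors in $V_+(E_1)$ and in $V_+(E_2)$ give ``upper--lower'' and ``lower--upper'' summands of total degree $\delta_1+\delta_2-1$; and the remaining $r-r_+^1-r_+^2+k$ vectors give ``lower--lower'' summands of total degree $\delta_1+\delta_2-2$. Balancedness requires that the top and bottom of these never both occur, i.e. that $k=0$ or $r-r_+^1-r_+^2+k=0$. In balanced general position one has exactly $k=\max(r_+^1+r_+^2-r,0)$: if $r_+^1+r_+^2\ge r$ the lower--lower count is forced to $0$, and if $r_+^1+r_+^2<r$ there are no upper--upper summands and the top degree simply drops by one. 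Either way all total degrees differ by at most $1$, so $F$ is balanced, with upper degree $\delta_1+\delta_2$ and upper rank $k$ in the first case and upper degree $\delta_1+\delta_2-1$, upper rank $r_+^1+r_+^2$ in the second. (That the chosen line subbundles genuinely give a direct-sum decomposition, not merely a fibrewise basis, is the part where mutual general position of the summands enters, and I would treat it as a routine genericity check.)

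To propagate along a longer chain I would track the right invariant. In the first case above, the upper--upper summands restrict on $C_2$ to subbundles of $E_{2,+}=V_+(E_2)\otimes\O_{C_2}(\delta_2)$ with \emph{constant} fibre, so their fibres at the next node $p_2$ again span $V_+(E_1)\cap V_+(E_2)\subseteq V_+(E_2)$, transported through $C_2$ by its strong-balanced trivialization. Gluing on $C_3$ and intersecting with $V_+(E_3)$ then reproduces the next term
\[
V_+(E_i)\ \supseteq\ V_+(E_i)\cap V_+(E_{i+1})\ \supseteq\ V_+(E_i)\cap V_+(E_{i+1})\cap V_+(E_{i+2})\ \supseteq\ \cdots
\]
of the forward flag, with the backward flag playing the symmetric role when the running top degree drops and one must instead prevent degrees on the right-hand end from falling too far. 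I would therefore strengthen the inductive hypothesis so that it records not merely that $E|_{C_1\cup\cdots\cup C_j}$ is balanced, but the subspace of $E_j(p_j)$ cut out by its top-degree summands, identified with the appropriate term of the forward flag; the general-position hypothesis guarantees that these subspaces have their generic dimensions, which is exactly what makes each successive two-link computation apply and the summand counts come out balanced at every gluing. Finally, for genuine balancedness one must check every connected subtree $C'$, not just $C$ itself; but $C'$ meets each link in a connected subtree on which $E$ stays strongly balanced with the same upper subspace, and its forward/backward flags are built from a subcollection of the same $V_+(E_m)$, so $C'$ inherits balanced general position and the same argument applies.

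The main obstacle, and the reason the flags rather than single upper subspaces must be carried through the induction, is precisely Remarks (ii)--(iii): after one gluing $F$ is only balanced, not strongly balanced, so it has no canonical upper subbundle and its ``upper'' line summands may be rigid and uncombinable. The substance of the proof is thus to show that the forward/backward flag data is an adequate substitute for a genuine upper subspace, and that it propagates correctly under each successive gluing exactly because of the general-position hypothesis. I expect the delicate point to be bookkeeping the regime change when the running intersection dies (the transition from the first to the second case of the two-link computation along the chain), where one must hand off control from the forward flag to the backward flag and verify that the two matchings agree to produce a single balanced decomposition with no total degree differing from its neighbours by more than one.
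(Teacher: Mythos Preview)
Your two-link analysis is correct and matches the paper's, including the dichotomy: when $r_+(E_1)+r_+(E_2)>r$ the glued bundle is not just balanced but \emph{strongly} balanced with upper subspace $V_+(E_1)\cap V_+(E_2)$, whereas when $r_+(E_1)+r_+(E_2)\le r$ it is only weakly balanced. The paper exploits the first case much more directly than you do: since the union $C_i\cup C_{i+1}$ then carries a strongly balanced bundle, it may be treated as a \emph{single link} in the chain, and the lemma's hypotheses (strong balancedness of links, balanced general position) are inherited. Iterating this merge, one reduces outright to the case where $r_+(E_i)+r_+(E_{i+1})\le r$ for every $i$, so that consecutive upper subspaces are \emph{disjoint}. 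After that reduction there is no need to carry flag data through an induction at all: the paper simply peels off the last two links, writes $C=C_1^*\cup C_2\cup C_3$, applies the inductive hypothesis to $C_1^*$ and $C_1^*\cup C_2$, and does a finite case analysis on the ``types'' $(11),(10),(01),(00)$ of the line summands, gluing each to a summand on $C_3$ of the forced type.

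Your proposal, by contrast, does not isolate this merging step and instead attempts to track the entire forward/backward flag through the induction. That is considerably harder, and your sketch does not complete it: the ``strengthened inductive hypothesis'' recording the subspace cut out by top-degree summands is not well-posed once the running bundle is only weakly balanced (as you yourself note, there is no canonical upper subspace and the upper line summands may be rigid), and the ``regime change'' where the running intersection drops to zero---which you flag as the delicate point---is exactly where the argument needs content you have not supplied. The paper's merge-then-case-analyze route sidesteps this entirely, so while your overall direction is sound, you are missing the simplifying observation that makes the induction close.
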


\begin{proof} 
	It suffices to prove $E$ itself 
	admits a balanced decomposition as the hypotheses descend to subchains.
We use induction on $e$ and on the rank $r$. The assertion is trivial if $r=1$,
 vacuous if $e=1$ and easy if $e=2$.
In fact if $e=2$ and $r_+(E_1)+r_+(E_2)>r$, then $E$ 
is clearly strongly balanced.
\par
Now we will assume $e\geq 3$ and use induction
on $e$. By the last remark, we may assume
\eqspl{rank}
{r_+(E_i)+r_+(E_{i+1})\leq r, \forall i,}
 or else $E|_{C_i\cup C_{i+1}}$ is strongly balanced and
 $C_i\cup C_{i+1}$ may be treated as a single link of the chain.
 This together with general position imply that the upper subspaces of $E_i, E_{i+1}$
 at $p_i=C_i\cap C_{i+1}$ have trivial intersection, $\forall i$. 
We switch notation, writing $C$ in the form $C^*_1\cup C_2\cup C_3$ where $C^*_1$
is itself a chain on $e-2$ links. By induction, we may assume
the assertion holds for $C^*_1$ and $C^*_1\cup C_2$,  i.e. $E^*_1:=E_{C^*_1}$
and $E_{12}=E_{C^*_1\cup C_2}$ are both admit balanced decompositions.
We will say that a line subbundle
of $E_{12}$ is of type $(11)$ if its restrictions on $C_1^*$ and $C_2$ are both of the upper
degree, and likewise for $(10)$,
$(111)$ etc. Assume to begin with that
$r_+(E^*_1)+r_+(E_2)>r$.  Then by induction, $E_{12}$
is a sum of line subbundles of types $(11), (01)$ and $(10) $,
where the first two make up the upper subspace at $p_2$. By \eqref{rank} (together with
general position), the first two types may be glued to a direct summand line subbundle
of type $0$ on $E_3$, while the last type may be glued to summands of type 0 or 1.
In total this gives a decomposition of $E$ with summands of types $(110), (010), (101), (100)$, which is
a balanced decomposition.\par
Now suppose $r_+(E^*_1)+r_+(E_2)\leq r$. Then $E_{12}$ is a sum of bundles of types
$(01), (10), (00)$. Again by \eqref{rank}, the first type glues to a summand of type 0 on $E_3$
to yield a summand of type $(010)$. At this point we could use induction on $r$ to conclude.
Alternatively,  suppose now $r_+(E_1^*)+r_+(E_3)>r-r_+(E_2)$. Then summands of type 0 on $E_2$
can be extended to types $(101, (001), (100)$, so we get a balanced decomposition for $E$.
Finally suppose $r_+(E_1^*)+r_+(E_3)\leq r-r_+(E_2)$. Then summands of type 0 on $E_2$
can be extended to types $(100), (001), (000)$, so again we get a balanced decomposition.

\end{proof}
\begin{rem}
	The Lemma says in particular that $E$ is a direct sum of line bundles. It does
	\emph{not} say $E$ is strongly balanced. Also, the Lemma will mainly
	be used in the case where all the $C_i$ are irreducible, but the more
	general statement is convenient for the induction.
	\end{rem}
\begin{cor}\label{smoothing}
If $E, C$ are as in Lemma \ref{balanced-lem}
then a general smoothing of $(E, C)$ is balanced.
\end{cor}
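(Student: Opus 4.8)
The plan is to combine the openness of balancedness, in a family version, with the exhibition of a single, manifestly balanced smoothing. By Lemma \ref{balanced-lem} the bundle $E$ splits on $C$ as a direct sum $E\cong\bigoplus_i L_i$ of line bundles whose total degrees all lie in $\{a, a+1\}$, where $a := \deg_+(E) - 1$, with exactly $r_+ := r_+(E)$ of them of total degree $a+1$. A bundle $G$ on $\P^1$ of the same rank $r$ and degree is balanced precisely when $G\cong r_+\,\O(a+1)\oplus(r-r_+)\,\O(a)$, equivalently when $h^1(G(-a-1)) = 0$ and $h^1(G^\vee(a)) = 0$; each of these is an open condition.

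First I would fix a one-parameter smoothing $\pi\colon\mathcal C\to S$ of the curve, with $\mathcal C$ a smooth surface, $\mathcal C_0 = C$ and $\mathcal C_t\cong\P^1$ for $t\neq 0$, and parametrize the extensions of $E$ across $\pi$. Such an extension amounts to the bundle $E$ on the partial normalization together with a gluing isomorphism of the two fibres over each node being smoothed, so the extensions are parametrized by an irreducible variety $\mathcal G$, a product of copies of $\GL_r$, one factor per node. Because $\mathcal C$ is a smooth surface, each summand $L_i$ extends to a line bundle $\mathcal L_i$ on $\mathcal C$, and $\deg(\mathcal L_i|_{\mathcal C_t}) = \mathcal L_i\cdot\mathcal C_t = \mathcal L_i\cdot\mathcal C_0 = \deg_{\mathrm{tot}}L_i$. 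Hence the block-diagonal extension $\mathcal E^{0} := \bigoplus_i\mathcal L_i$, corresponding to the gluings that respect the decomposition, has general fibre $\mathcal E^{0}_t\cong r_+\,\O(a+1)\oplus(r-r_+)\,\O(a)$, which is balanced.

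Next I would promote the openness of balancedness to the family parametrized by $\mathcal G$: the generic splitting type of the restriction of the universal extension to the fibres $\mathcal C_t$ ($t\neq 0$) is upper-semicontinuous in $g\in\mathcal G$, so the locus of $g$ whose general fibre is balanced is open. By the previous paragraph this locus contains $\mathcal E^{0}$, hence is nonempty; as $\mathcal G$ is irreducible it is therefore dense, and a general smoothing of $(E, C)$ has balanced general fibre.

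The main obstacle is the deformation-theoretic bookkeeping of the middle paragraph: making precise the parameter space $\mathcal G$ of extensions across the smoothing, proving it irreducible (so that the nonempty open balanced locus is automatically dense), and checking that the generic splitting type over $S\setminus\{0\}$ varies upper-semicontinuously with the gluing parameter rather than merely with $t$. It is worth stressing that one cannot shortcut this by running semicontinuity on the central fibre $C$ directly: the multidegrees of the summands $L_i$ are unconstrained beyond their totals, so $H^1(C, E(-a-1))$ may well be nonzero, and the argument must instead compare the general fibres across $\mathcal G$, anchored at the split extension $\mathcal E^{0}$.
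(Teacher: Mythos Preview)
Your construction of the split smoothing $\mathcal E^0=\bigoplus_i\mathcal L_i$ is correct and is exactly the reference object the paper uses. The gap is in your parametrization step. Gluing isomorphisms at the nodes parametrize bundles on the nodal curve $C$ with prescribed pullback to the normalization --- not extensions of a \emph{fixed} bundle $E$ across the smoothing $\mathcal C$. Once $E$ is given on $C$, the gluings are already determined; extensions of $E$ to $\mathcal C$ are governed by deformation theory, with tangent space $H^1(C,\endo E)$ and obstructions in $H^2(C,\endo E)=0$. So your $\mathcal G=\prod_{\text{nodes}}\GL_r$ is the wrong object, and the ``bookkeeping obstacle'' you flag is not a matter of precision but of having the wrong space. (A correct version of your strategy would use smoothness of the versal deformation space via $H^2=0$, but see below.)

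The paper's route is shorter and in fact proves more. Using balancedness on \emph{every} connected subchain --- not just on $C$ --- it observes that each line-bundle summand $L_i^\vee\otimes L_j$ of $\check E\otimes E$ has total degree $\geq -1$ on every subchain, and such a line bundle has $H^1=0$ by an easy induction on chain length. Hence $H^1(C,\check E\otimes E)=0$. Now take \emph{any} smoothing $\tilde E$ and the split smoothing $\tilde E_1=\bigoplus\mathcal L_i$; this vanishing lets the identity $E\to E$ on the central fibre extend to a map $\tilde E_1\to\tilde E$ over the family, which is then an isomorphism on the general fibre. So every smoothing --- not merely a general one --- is balanced. Your closing remark that $H^1(C,E(-a-1))$ may be nonzero is well taken against one naive semicontinuity argument, but the cohomology that actually matters is $H^1(C,\endo E)$, and that \emph{does} vanish precisely because balancedness is imposed on all subchains; this is the point your approach misses.
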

\begin{proof}
	Note that $\check E\otimes E$ is a direct sum of lines bundles
	whose total degree on any subchain
	(a fortiori, on any component) is $\geq -1$. It is easy to see 
	by induction on chain length that such a line bundle has $H^1=0$.
Now consider a deformation $(\tilde E, \tilde C)$ of $(E, C)$ where $C$ smooths. 
Let $(\tilde E_1, \tilde C)$
be an analogous deformation where $\tilde E_1$ is a direct sum of line bundles deforming the
RHS of \eqref{upper}. It is easy to check that 
\[H^1(C, (\check{\tilde E}_1\otimes \tilde E)\otimes\O_C)=H^1(C,\check E\otimes E)=0\]
and consequently the isomorphism \eqref{upper} extends to a map $\tilde E_1\to\tilde E$,
which must be an isomorphism on the general fibre.
\end{proof}
\begin{rem}
	The line bundle $\O(1, -1, -1)$ has $H^1\neq 0$.
	\end{rem}
We conclude with some remarks on bundles on combs. 
These will be used in the proof of Theorem \ref{balanced-high} below.
By definition
a comb is a rational tree of the form $B\cup\bigcup\limits_{i=1}^t T_i$
where $B$, the base, and $T_1,..., T_t$, the teeth, 
are $\P^1$ and each tooth meets the base. 
For 'generalized comb', replace $\P^1$
by 'rational tree' ( thus every rational tree is a generalized comb but
it is understood that a particular pettinal presentation is specified).
In general, the behavior of bundles on combs is complicated and we will
just consider a special, dentally challenged case used below
(see Theorem \ref{balanced-high}).
\begin{lem}\label{comb-lem}
	Let $E$ be a rank-$r$ bundle on a generalized comb $B\cup\bigcup\limits_{i=1}^t
	T_i$. Assume\par (i) $E_B, E_{T_1}, ...E_{T_t}$ are strongly balanced;
	\par (ii) $r_+(E_{T_i})=r-1, i=1,...,t$
and the upper hyperplanes in $V_+(E_{T_i})\subset E(p_i), i=1,...,t$ are general,
where $p_i=B\cap T_i$;\par
(iii) $t\leq r_+(E_B)$ (resp. $t<r_+(E_B)$).\par
Then $E$ is balanced (resp. strongly balanced).
	\end{lem}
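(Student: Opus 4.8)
The plan is to induct on the number of teeth $t$, at each stage peeling off one tooth and invoking the two-link case of Lemma \ref{balanced-lem}. I will carry along a sharper inductive statement: if $t\le r_+(E_B)$ then $E$ is balanced, and if $t<r_+(E_B)$ then $E$ is strongly balanced with $r_+(E)=r_+(E_B)-t$. The base case $t=0$ is just hypothesis (i), with $E=E_B$ strongly balanced of upper rank $r_+(E_B)$; note $r_+(E_B)>0$ always, so the strong case applies there.

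For the inductive step I write $C=C'\cup T_t$ with $C'=B\cup\bigcup_{i=1}^{t-1}T_i$, the two pieces meeting along the single node $p_t=B\cap T_t$. The restriction $E'=E|_{C'}$ again satisfies (i)--(iii) with $t-1$ teeth, and since $t\le r_+(E_B)$ forces $t-1<r_+(E_B)$, the inductive hypothesis gives that $E'$ is strongly balanced with $r_+(E')=r_+(E_B)-(t-1)$. Now $C'\cup T_t$ is a chain of two rational trees, both links strongly balanced ($E'$ and $E_{T_t}$), so Lemma \ref{balanced-lem} applies once I check balanced general position at $p_t$. For two links this is exactly the condition that $V_+(E')(p_t)$ and $H_t=V_+(E_{T_t})(p_t)$ be in general position inside $E(p_t)$.

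It then remains to read off the two cases from the numerology $r_+(E')+r_+(E_{T_t})=(r_+(E_B)-t+1)+(r-1)=r+(r_+(E_B)-t)$. If $t<r_+(E_B)$ this exceeds $r$, so the two-link case of Lemma \ref{balanced-lem} gives that $E$ is strongly balanced, and, under general position, its upper subbundle has rank $r_+(E')+r_+(E_{T_t})-r=r_+(E_B)-t$, closing the sharpened induction. If instead $t=r_+(E_B)$, the sum equals $r$, and the $e=2$ case of Lemma \ref{balanced-lem} yields only a balanced decomposition; this is the borderline where no canonical upper subbundle survives, matching the weaker (non-strong) conclusion asserted by the lemma.

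The only genuinely delicate point is the general-position check feeding Lemma \ref{balanced-lem}: I must ensure the subspace $V_+(E')(p_t)\subset E(p_t)$ produced by the inductive construction on $C'$ does not conspire with the hyperplane $H_t$. The key observation is that $V_+(E')(p_t)$ is built entirely from the data on $C'$ (the base, the first $t-1$ teeth, and the hyperplanes $H_1,\dots,H_{t-1}$) and is therefore independent of $H_t$; since (ii) lets me take $H_t$ general after $C'$ is fixed, the intersection $V_+(E')(p_t)\cap H_t$ has the expected dimension $r_+(E')-1$, which is precisely what the two-link case needs. Tracking the value $r_+(E')=r_+(E_B)-(t-1)$ through the induction is then the bookkeeping that aligns the two subcases with the $>r$ versus $=r$ dichotomy of Lemma \ref{balanced-lem}.
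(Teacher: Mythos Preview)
Your inductive argument is correct and gives a genuinely different route from the paper's proof. The paper does not induct on $t$; instead it writes down the balanced decomposition of $E$ in one stroke, identifying the upper subbundle directly as the one corresponding to the intersection $V_+(E_B)\cap\bigcap_{i=1}^t V_+(E_{T_i})$ inside $V_+(E_B)$, and then explicitly exhibits the complementary line summands of degree $d_+-1$ (coming from $V_+(E_B)\cap\bigcap_{i\neq j}V_+(E_{T_i})$ glued to a non-upper summand of $E_{T_j}$, plus non-upper summands of $E_B$ glued to elements of $\bigcap_i V_+(E_{T_i})$). Your approach is more modular, reducing everything to the already-proved two-link case of Lemma~\ref{balanced-lem} and tracking only the single number $r_+(E')=r_+(E_B)-(t-1)$; the paper's approach is more explicit and makes visible the precise structure of the decomposition (in particular which subspaces of the fibres the summands pass through), information that can be useful when one later needs to check general-position conditions against further data. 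Your general-position step---that $V_+(E')(p_t)$ depends only on $H_1,\dots,H_{t-1}$ so that a general $H_t$ meets it transversely---is exactly the content of the paper's implicit claim that the hyperplanes cut down $V_+(E_B)$ by one dimension each.
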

\begin{proof}
	We assume $t<r_+(E_B)$ as the equality case is similar.
	In this case we can see easily as above that
	$E$ is  strongly balanced with unique upper subbundle 
	or rank $r_+(E_B)-t$ and degree $d_+=\deg_+(E_B)+
	\sum\limits_{i=1}^t\deg_+(E_{T_i})$ 
		corresponding to
	$V_+(E_B)\cap\bigcap\limits_{i=1}^tV_+(E_{T_i})$, complemented by
	$t$ many general line subbundles of degree $d_+-1$ corresponding to 
	$V_+(E_B)\cap\bigcap\limits_{i\neq j}V_+(E_{T_i}), j=1,...,t$
	(mod  $V_+(E_B)\cap\bigcap\limits_{i=1}^tV_+(E_{T_i})$), 
	glued to a general, non-upper subbundle of $E_{T_j}$,
	plus $r-r_+<r-t$ many general line subbundles
	corresponding to linearly independent elements of
	 $\bigcap\limits_{i=1}^tV_+(E_{T_i})$ glued to general,
	non-upper, line subbundles on $E_B$.
		\end{proof}

\section{Rational curves with balanced normal bundle}
The normal bundle $N_{C/X}$ of a rational curve $C$ on $X$ is related
to the movement of $C$ on $X$: $N_{C/X}$ being semipositive,
i.e. globally generated, means that the curve moves freely,
filling $X$; when that is so, $N_{C/X}$ being balanced means
$C$ can be 'pinned down' at the maximum number of general points on $X$,
i.e. that $C$ has 'maximal momentum' (or energy).\par
Our purpose in this section
is to prove, by a suitable degeneration,
 the existence of rational curves $C$ of low degree on general Fano hypersurfaces $X$
such that the normal bundle $N_{C/X}$ is balanced. The idea is to
work with a polygon, i.e. a chain of lines, on a union of hyperplanes, with at most one line
per hyperplane, and suitably modify its
normal bundle at non-lci points, which are the smooth points on the polygon that are
singular on the union of hyperplanes. This result, which will be used in the proof 
of our  irreducibility
result. will later be generalized to higher-degree curves
using fans (see Theorem \ref{balanced-high}). No fans will be used here.\par
We begin with a local construction. In $\C\times \C^n, n\geq 3$ 
with coordinates $t, x, y, z, ...$ consider
a hypersurface $U$  with equation
$tz=xy$. We view $U$ as a family of hypersurfaces $U_t\subset\C^n$ over $\C$.
Then $U_t$ has a normal-crossing
double point for $t=0$ and is smooth for $t\neq 0$. Now blow up $U$ in the locus $t=y=0$, i.e. one
component $U_{0,1}$ of the special fibre $U_0$, which has coordinates $x, z,...$. 
The blowup $\tilde U$ is smooth and the map $\tilde U\to U$ is small. In the relevant open set we can write
\[t=uy, x=uz,\]
so the special fibre $\tilde U_0$ has two components $\tilde U_{0,1}, \tilde U_{0,2}$, 
with respective equations $y, u$,  and  $\tilde U_{0,2}\to U_{0,2}$ is an isomorphism while
$\tilde U_{0,1}\to U_{0,1}$ is the blow-up of the  smooth codimension-2 locus $x=z=0$.\par
Now working globally,  let $H_i=(x_i)$ be the $i$-th coordinate hyperplane in $\P^n$ and let
$X_0=\bigcup\limits_{i=1}^d H_i=(x_1...x_d)$, $d\leq n$. Consider an $e$-gon, $e\leq d$, $C_0\subset X_0$ of the form
\[C_0=\bigcup\limits_{i=1}^e L_i, L_i\subset H_i, L_i\cap L_{i+1}=L_i\cap H_{i+1}=: p_{i,i+1}, i=1,...,e-1,\]
$L_i\subset H_i$ being general lines. Set $p_{i,j}=L_i\cap H_j, K_{i,j}=H_i\cap H_j,  i\neq j$. Thus each point $p_{i,j}, j\neq i+1$,
is a smooth, non-lci point on $C_0$ relative to $X_0$,
while each $p_{i,i+1}$ is an lci point relative to $X_0$.
Note that $K_{i,j}$ can be identified with the fibre at $p_{i,j}$ of the projectivized normal bundle
$\P(N_{L_i/H_i})$ which is a trivial projective bundle $L_i\times K_i$, so we identify $K_{i,j}\simeq K_i, \forall j\neq i$.
\par
Now let $X_1=(f_d)$ be a general degree-$d$ hypersurface through
all the non-lci points $p_{i,j}, j\neq i+1$. By Lemma \ref{gen-position-lem} below,
$X_1$ is smooth at all the non-lci points. Consider a family $\cX$ in $\C\times\P^n$ with equation
$\prod\limits_{i=1}^d x_i+tf_d$. Construct  the modification $\tilde \cX$ as above where
at each non-lci point $p_{i,j}$, only $H_i$ is blown up (i.e. $H_i$ plays the role of $U_{0,1}$
and $f_d$ plays the role of the $z$ coordinate). 
Then $C_0$ lifts to a polygon $\tilde C_0$ in
$\tilde \cX$  and $\tilde \cX$  is
smooth along $\tilde C_0$ and $\tilde\cX_0$ is smooth at the points corresponding
to the $p_{ij}, j\neq i+1$ because the proper transform of $L_i$ is disjoint from the inverse
imge of $H_j$. Thus $\tilde C_0$ is everywhere lci relative to $\tilde\cX_0$.
\begin{lem}\label{gen-position-lem}
A general $X_1$ as above is smooth in all $p_{i,j}, j\neq i+1$ and transverse to $H_j$.
Moreover, for each fixed $i$, if we identify $K_{i,j}\simeq K_i,  \forall j\neq i+1$, 
then the collection of tangent
planes to $X_1\cap K_{i,j}$ at the $p_{i,j}, j\neq i+1$ 
become identified with a general collection
of hyperplanes in $K_i$.
\end{lem}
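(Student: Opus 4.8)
The plan is to reduce all three assertions to a single statement about the $1$-jets of $f_d$ at the points of $S:=\{p_{i,j}:j\neq i+1\}$, namely that, subject only to the required vanishing on $S$, the differentials $df_d(p_{i,j})$ may be prescribed freely and independently. Indeed $X_1=(f_d)$ is smooth at $p_{i,j}$ exactly when $df_d(p_{i,j})\neq 0$; it is transverse to $H_j$ there exactly when the restriction $df_d(p_{i,j})|_{T_{p_{i,j}}H_j}\neq 0$; and, since $X_1\cap K_{i,j}$ is cut out on $K_{i,j}=H_i\cap H_j$ by $f_d|_{K_{i,j}}$, its embedded tangent hyperplane at $p_{i,j}$ is $\ker\bigl(df_d(p_{i,j})|_{T_{p_{i,j}}K_{i,j}}\bigr)$. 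Note that $L_i$ meets $K_{i,j}$ transversally in $H_i$, the two being of complementary dimension there, so $T_{p_{i,j}}K_{i,j}$ is a complement to $T_{p_{i,j}}L_i$; hence this tangent hyperplane depends only on the part of $df_d(p_{i,j})$ normal to $L_i$, and, under the identification $K_{i,j}\simeq K_i$ recalled above, becomes a hyperplane in the fixed space $K_i$ by a linear isomorphism independent of $f_d$.

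Granting the freedom of the differentials, the first two conclusions are immediate: the loci where $df_d(p_{i,j})$ either vanishes or restricts to zero on $T_{p_{i,j}}H_j$ are proper linear subspaces of the space of admissible differentials, hence avoided by the general $f_d$. For the last conclusion, the restriction map $T^*_{p_{i,j}}\P^n\to (T_{p_{i,j}}K_{i,j})^*$ is surjective, so independent general differentials produce, through the linear identification $K_{i,j}\simeq K_i$, an arbitrary and in particular general collection of hyperplanes for each fixed $i$; equivalently, the rational map sending $X_1$ to the tuple of tangent hyperplanes is dominant onto the space of hyperplane-collections in $K_i$.

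The crux, which I expect to be the main obstacle, is therefore the surjectivity of the jet-evaluation map
\[
H^0\bigl(\P^n,\O(d)\bigr)\longrightarrow \bigoplus_{p\in S} J^1_p\,\O(d),
\]
equivalently that the first-order data carried by $S$ impose independent conditions on degree-$d$ forms ($H^1(\mathcal I_{2S}(d))=0$ in the normal directions along $C_0$). This is a postulation statement, and one must check that the configuration avoids the exceptional behaviour of fat points; it is precisely here that the hypotheses $e\le d\le n$ and the generality of the lines $L_i$ are used. I would argue by restricting to the polygon $C_0=L_1\cup\dots\cup L_e$, whose nodes $S$ avoids. The generality of the lines gives, in the range $d\ge e$, the good postulation of the configuration $C_0$, so that $H^0(\O(d))\to H^0(\O_{C_0}(d))$ and its first-order refinement are surjective; while on each $L_i\simeq\P^1$ the at most $d-2$ chosen points impose independent value conditions on $H^0(\O_{L_i}(d))$, since $d-2<d+1$. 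The normal derivatives at the $p_{i,j}$ are then supplied by the ambient directions transverse to $C_0$, which is exactly the content distinguishing the jet-vanishing from mere surjectivity onto values.

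Finally I would assemble the pieces: the jet surjectivity furnishes, for the general member $f_d\in|\mathcal I_S(d)|$, nonzero and freely varying differentials at every $p_{i,j}$, whence $X_1$ is smooth at each $p_{i,j}$ and transverse to every $H_j$, and—after transporting through the trivialization $\P(N_{L_i/H_i})\simeq L_i\times K_i$—the tangent hyperplanes to $X_1\cap K_{i,j}$ form, for each fixed $i$, a general collection of hyperplanes in $K_i$. The subtlety to monitor throughout is the decoupling: because one global form $f_d$ controls all the points simultaneously, it is essential that prescribing the differential at one $p_{i,j}$ not constrain the others, and this independence is exactly what the $H^1$-vanishing guarantees.
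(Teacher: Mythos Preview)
Your reduction to surjectivity of the $1$-jet evaluation $H^0(\O(d))\to\bigoplus_{p\in S}J^1_p\,\O(d)$ is correct, and once that is granted the three conclusions follow exactly as you describe. The gap lies in your argument for the surjectivity itself. Surjectivity of $H^0(\O(d))\to H^0(\O_{C_0}(d))$ controls only values and \emph{tangential} derivatives along the $L_i$; but the tangent hyperplanes the lemma asks for live in $K_{i,j}$, which meets $L_i$ transversally, so what matters is the \emph{normal} derivative. Your phrase ``its first-order refinement'' is therefore asserting precisely the statement at issue---say $H^1(I_{C_0}^2(d))=0$, or at least joint surjectivity onto $\bigoplus_{p}N^*_{L_{i(p)}/\P^n}(d)_p$---and the sentence that normal derivatives are ``supplied by the ambient directions transverse to $C_0$'' names the target without providing a mechanism to hit it. You correctly flag the decoupling across the various $p_{i,j}$ as the crux in your final paragraph, but nothing preceding it establishes that independence; and since the $p_{i,j}$ lie several to a line rather than in general position, standard fat-point interpolation does not apply off the shelf.

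The paper's route is genuinely different and sidesteps the global postulation of $2S$: it runs an induction on $d$ and $e$, peeling off the hyperplane $H_1$ via the restriction exact sequence so that the kernel is governed by the $(d-1,e-1)$ configuration on $L_2,\dots,L_e$ and the quotient by the points on $H_1$. Only $H^1$-vanishing of ordinary (not squared) point ideals is propagated through the induction. Your approach, if the first-order surjectivity were actually proved, would give a more conceptual non-inductive argument; as written, that step is the missing substance.
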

\begin{proof}
Use induction on $d$ and $e$, using the exact sequence
\[\exseq{\O(d-1)(-\sum\limits_{\stackrel{i\geq 2}{j\neq i+1}} -p_{i,j})}
{\O(d)(-\sum\limits_{\stackrel{i\geq 1}{j\neq i+1}} -p_{i,j})}
{\O_{H_1}(d)(-\sum\limits_{j\neq 2} -p_{1,j})}.\]
By induction, the first term has vanishing $H^1$, hence so does the middle term and the map
on $H^0$ is surjective.
\end{proof}
At a point of $\tilde C_0$ coming from a 
non-lci point $p_{i,j}$, the normal bundle $N_{\tilde C_0/\tilde X_0}$ is an elementary 
'down' modification
(i.e. subsheaf of colength 1) of $N_{L_i/H_i}$ corresponding to the hyperplane
 $T_{i,j}=T_{p_{i,j}}X_1\cap T_{p_{i,j}}(K_{i,j})\subset 
 K_{i,j}\simeq K_i=N_{L_i/H_i}(p_{i,j})$,
 i,e, the kernel of the surjection $N_{L_i/H_i}\to N_{L_i/H_i}(p_{i,j})/T_{i,j}$
From the Lemma and the results of the previous section
 it then follows easily that the normal bundle $N_{\tilde C_0/\tilde X_0}$ is balanced. In
particular, it is unobstructed and $\tilde C_0$ smooths out in the family $\cX$ to a smooth curve on $X_t$
with balanced normal bundle. Since a balanced bundle of nonnegative degree is semipositive,
hence has vanishing $H^1$, a rational curve  with  balanced normal bundle on a 
smooth Fano hypersurface automatically deforms with the hypersurface, we conclude:
\begin{prop}\label{balanced-prop}
For all $e\leq d\leq n$, a general hypersurface $X$ of degree $d$ in $\P^n$ contains
a rational curve $C$ of degree $e$ such that the normal bundle $N_{C/X}$ is balanced:
\[N_{C/X}\simeq r_+\O_C(a_++1)\oplus (n-2-r_+)\O_C(a_+)\]
where
\eqspl{balanced-normal}
{a_+=\lfloor \frac{(n+1-d)e-2}{n-2}\rfloor, r_+=(n+1-d)e-2-(n-2)a_+.}
\end{prop}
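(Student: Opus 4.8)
The plan is to take for $C$ the general smoothing of the polygon $\tilde C_0 = L_1\cup\dots\cup L_e$ inside the modified family $\tilde\cX$ constructed above, and to deduce the balancedness of $N_{C/X}$ from that of $N_{\tilde C_0/\tilde X_0}$ by Corollary \ref{smoothing}. Since $\tilde C_0$ is everywhere lci relative to the reducible central fibre $\tilde X_0$, the sheaf $E:=N_{\tilde C_0/\tilde X_0}$ is a genuine rank-$(n-2)$ vector bundle on the rational chain $\tilde C_0$, and the whole problem reduces to verifying the two hypotheses of Lemma \ref{balanced-lem}: that each restriction $E_i := E|_{L_i}$ is strongly balanced, and that $E$ is in balanced general position.

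For the first hypothesis I would argue link by link. Away from the non-lci points $E_i$ coincides with $N_{L_i/H_i}\cong\O_{L_i}(1)^{\oplus(n-2)}$, and at each non-lci point $p_{i,j}$ it is the colength-one down modification determined by the hyperplane $T_{i,j}$ recalled before the statement; thus $E_i$ is the kernel of a surjection from $\O_{L_i}(1)^{\oplus(n-2)}$ onto a sum of one-dimensional skyscrapers, one per non-lci point of $L_i$. Since $L_i\cong\P^1$ this is governed entirely by splitting type. A routine count shows an interior link carries $d-3$ such modifications and an end link $d-2$; as $d\leq n$ keeps this number at most $n-2$, a general modification of $\O(1)^{\oplus(n-2)}$ by that many general hyperplanes at distinct points splits as $\O(1)^{\oplus(n+1-d)}\oplus\O^{\oplus(d-3)}$ (resp. $\O(1)^{\oplus(n-d)}\oplus\O^{\oplus(d-2)}$). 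Each $E_i$ is thus balanced on $\P^1$, hence strongly balanced, and summing per-link degrees gives $\sum_i\deg E_i = (n+1-d)e-2$, matching the normal-bundle degree computed below.

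For the second hypothesis, balanced general position is precisely what Lemma \ref{gen-position-lem} delivers: the upper subbundle of each $E_i$ is its $\O(1)$-part, whose fibre $V_+(E_i)$ at a node is cut inside the normal space $K_i$ by the hyperplanes $T_{i,j}$, and since these are general the forward and backward flags formed by intersecting consecutive upper subspaces along the chain attain the expected dimensions and lie in mutual general position. Lemma \ref{balanced-lem} then shows $E$ is balanced, Corollary \ref{smoothing} transports this to the general smoothing $N_{C/X_t}$ for $t\neq 0$, and---as observed before the statement---the vanishing of $H^1$ forces $C$ to deform with the hypersurface to a general $X$. The numerical shape finally follows from $\rk N_{C/X}=n-2$ and $\deg N_{C/X}=(n+1-d)e-2$, the latter read off from $0\to T_C\to T_X|_C\to N_{C/X}\to 0$ together with $0\to T_X\to T_{\P^n}|_X\to\O_X(d)\to 0$; by the division algorithm $(n+1-d)e-2 = (n-2)a_+ + r_+$ with $0\le r_+<n-2$ yields the stated $a_+$ and $r_+$.

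The step I expect to be the main obstacle is the balanced general position check---controlling the mutual position of the upper subspaces $V_+(E_i)$ at successive nodes of the chain. This is not a formal consequence of strong balancedness on the separate links (it fails for special modifications), and it is exactly here that the generality of $X_1$, transmitted to the hyperplanes $T_{i,j}$ via Lemma \ref{gen-position-lem}, is indispensable. The remaining ingredients---the $\P^1$ splitting-type computation, the degree bookkeeping, and the passage to the smoothing---are routine given \S1.
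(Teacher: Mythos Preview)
Your proposal is correct and follows exactly the paper's approach: the proof of the proposition \emph{is} the construction carried out in \S2 just before the statement, and the paper summarizes it in one sentence (``From the Lemma and the results of the previous section it then follows easily that the normal bundle $N_{\tilde C_0/\tilde X_0}$ is balanced''), whereas you have unpacked the link-by-link splitting types, the degree bookkeeping, and the invocation of Lemma~\ref{balanced-lem} and Corollary~\ref{smoothing} explicitly. Your identification of the balanced-general-position check as the crux, and of Lemma~\ref{gen-position-lem} as what supplies it, matches the paper's reasoning precisely.
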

This result will later be reproved and generalized to higher-degree curves 
(see Thm \ref{balanced-high}). The lower-degree case will play a role in the proof
of our main result, through the following consequence:
\begin{cor}\label{curve-thru-point-cor}
Notations as above, if $a_+\geq 0$ and $q_1,...,q_{a_++1}$ are general points on $X$, 
there is a rational curve $C$ of degree $e$ on $X$ through them and the family
 of such curves near $C$ passing through $q_1,..,q_{a_++1}$
 is $r_+$-dimensional and fills up a 
 subvariety whose proper transform on the blowup of $X$ in
 $q_1,...,q_{a_++1}$ is locally  smooth and
 $(r_++1)$-dimensional  at its intersection with each exceptional divisor. 
\end{cor}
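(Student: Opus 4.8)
The plan is to reduce the whole statement to the deformation theory of the \emph{proper transform} of $C$ inside the blown-up variety, where the point conditions disappear and the relevant normal bundle is computed directly from $N_{C/X}$.

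Let $C$ be the curve furnished by Proposition \ref{balanced-prop}, a smooth rational curve of degree $e$ with $N_{C/X}\simeq r_+\O_C(a_++1)\oplus(n-2-r_+)\O_C(a_+)$, and let $f\colon\P^1\to C\subset X$ be its normalization. I would choose $a_++1$ general points $p_1,\dots,p_{a_++1}\in\P^1$ and set $q_j=f(p_j)$. Let $\pi\colon\hat X\to X$ be the blow-up at the distinct smooth points $q_1,\dots,q_{a_++1}$, with exceptional divisors $E_j=\pi^{-1}(q_j)\cong\P^{n-2}$, and let $\hat C\subset\hat X$ be the proper transform of $C$; since $C$ is smooth at each $q_j$, $\hat C\cong C$ and $\hat C$ meets each $E_j$ transversally in the single point cut out by the tangent line $T_{q_j}C$. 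The first step is the standard identification
\[N_{\hat C/\hat X}\simeq N_{C/X}\Bigl(-\sum_{j=1}^{a_++1}p_j\Bigr)\simeq r_+\O_C\oplus(n-2-r_+)\O_C(-1),\]
checked one blown-up point at a time: from $K_{\hat X}=\pi^*K_X+(n-2)\sum_j E_j$ and $E_j\cdot\hat C=1$ one gets $\deg N_{\hat C/\hat X}=-K_{\hat X}\cdot\hat C-2=\deg N_{C/X}-(n-2)(a_++1)$, matching the elementary down-modification at the $p_j$. In particular $H^1(\hat C,N_{\hat C/\hat X})=0$ and $h^0(\hat C,N_{\hat C/\hat X})=r_+$.

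For existence through \emph{general} points I would first check that the multi-evaluation is dominant. On the space of deformations of $C$ in $X$ (smooth and unobstructed since $H^1(N_{C/X})=0$), marked at $a_++1$ moving points, the evaluation to $X^{a_++1}$ has surjective differential at $[C]$: it is the sum of $H^0(N_{C/X})\to\bigoplus_j N_{C/X}(p_j)$, surjective because each balanced summand $\O(a)$ with $a\geq a_+$ imposes independent conditions at $a_++1$ distinct points, together with the tangent-along-$C$ directions, which cover each $T_{q_j}C$; the two pieces span every $T_{q_j}X$. The count $(n-2)(a_++1)+r_++(a_++1)=(n-1)(a_++1)+r_+$ then gives generic fibre dimension $r_+$. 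Hence for general $q_j$ such a $C$ exists, and the family of them is $r_+$-dimensional; intrinsically, this family is the space of deformations of $\hat C$ in $\hat X$, which by $H^1(N_{\hat C/\hat X})=0$ and $h^0=r_+$ is smooth of dimension $r_+$.

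Finally, let $B$ be this smooth $r_+$-dimensional family, $\hat{\mathcal U}\to B$ the universal proper transform, $\tilde Y=\overline{\bigcup_{b\in B}\hat C_b}\subset\hat X$ its image, and $Y=\pi(\tilde Y)$ the swept-out subvariety, so that $\tilde Y$ is the proper transform of $Y$. To see $\tilde Y$ is smooth and $(r_++1)$-dimensional along each $E_j$, I would show the evaluation $\hat{\mathcal U}\to\hat X$ is an immersion there: the $r_+$ trivial summands $\O_C$ of $N_{\hat C/\hat X}$ are generated by \emph{nowhere-vanishing} sections, so at every point of $\hat C$ — in particular at $\hat C\cap E_j$ — they span an $r_+$-dimensional subspace of the normal space, and adjoining the tangent direction of $\hat C$ (transverse to $E_j$) yields an injective differential of rank $r_++1$. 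Thus $\tilde Y$ is locally smooth of dimension $r_++1$ near each $E_j$, and $\tilde Y\cap E_j$ is the $r_+$-dimensional locus traced by the tangent directions $T_{q_j}C_b$. The main obstacle is precisely this behaviour at the exceptional divisors: one must both establish the identification $N_{\hat C/\hat X}\simeq N_{C/X}(-\sum p_j)$ and exploit that its moving part, being trivial, never vanishes, so the family keeps moving $\hat C$ transversally across each $E_j$ without creating a singularity or dropping dimension; once $H^1(N_{\hat C/\hat X})=0$ is in hand, existence and the dimension count $r_+$ are formal.
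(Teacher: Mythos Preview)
Your argument is correct and follows essentially the same route as the paper: both reduce to the vanishing $H^1(N_{C/X}(-\sum p_j))=0$ and the fact that $H^0(N_{C/X}(-\sum p_j))$ comes entirely from the upper subbundle, which becomes the trivial rank-$r_+$ bundle after the twist. Your blow-up formulation $N_{\hat C/\hat X}\simeq N_{C/X}(-\sum p_j)$ and the explicit immersion check along each $E_j$ simply spell out in more detail what the paper compresses into one sentence about tangent spaces on the blowup.
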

\begin{proof}
For $C$ with balanced normal bundle we have  $H^1(N_{C/X}(-p_1...-p_{a_++1}))=0$. 
Therefore by deforming $C$ and the points we get
a general $(a_++1)$-tuple on $X$ and moreover for any fixed collection of points $q_i$
the curves sweep up a subvariety whose tangent spaces on the blowup come from
$H^0(N_{C/X}(-p_1...-p_{a_++1}))=H^0(N_{C/X, +}(-p_1...-p_{a_++1}))$, 
where $N_{C/X, +}(-p_1...-p_{a_++1})$ is
 a trivial bundle.
\end{proof}
\begin{rem}
The variety filled up by the curves through the $(q_\bullet)$ points will 
naturally be singular
at those points, so its dimension cannot be computed by looking
at tangent spaces.; what is important for our arguments below 
is its local dimension at the points.
\end{rem}
\begin{rem}
In general, there exists rational curves with positive, hence unobstructed, but unbalanced
normal bundle on general hypersurfaces. 
For example, a quadric $X$ of large enough dimension is ruled by planes,
and conics in those planes- which are special among conics on $X$- have positive, unbalanced normal
bundle.
\end{rem}
\begin{rem}
After this was written, we found that closely related results
on balanced normal bundles has been obtained earlier by
Coskun and Riedl \cite{coskun-riedl} using different, degeneration-free methods.
Their results apply in a larger range of degrees compared to Lemma \ref{balanced-prop}
 and for complete intersections as well,
rather than just hypersurfaces. Anyhow a more general result for hypersurfaces will be given
below in Proposition \ref{balanced-high}
\end{rem}
Next we briefly review with proof, in a form convenient for our purposes, 
some facts on
unfolding and unkinking rational curves,
which can be found in Koll\'ar's book \cite{kollar-rat-curves},
especially Sect. II.3, Thm. 3.14 on free rational curves.
Let \[f:\P^1\to X\]
be a morphism to a smooth $n$-fold, which we assume is a 
general member of a maximal family filling up $X$.
\begin{lem}\label{free-rational-lem}
Notation as above, $f$ is an immersion of $n\geq 2$ and an embedding if $n\geq 3$.
\end{lem}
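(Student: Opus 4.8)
The plan is to reduce everything to the freeness of $f$ and then run two general-position arguments. First I would establish freeness: since $f$ is a general member of a family of rational curves dominating $X$, a standard argument (as in Koll\'ar II.3) shows $f^*T_X$ is globally generated, hence nef, so $f^*T_X\simeq\bigoplus_{i=1}^n\O_{\P^1}(a_i)$ with every $a_i\ge 0$ and $H^1(\P^1,f^*T_X)=0$. Consequently $\mathrm{Mor}(\P^1,X)$ is smooth at $[f]$ of dimension $\delta+n$, where $\delta=\deg f^*T_X=-K_X\cdot f_*[\P^1]$. Because the family fills up $X$, the space of curves (maps modulo $\mathrm{Aut}\,\P^1$) has dimension at least $n-1$, which forces $\delta\ge 2$; and for the general member the splitting type is the generic one, so the positive part $P:=\bigoplus_{a_i\ge 1}\O_{\P^1}(a_i)$ has as large a rank as the constraints permit.

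Next, immersion. Here $f$ is an immersion exactly when $df\colon T_{\P^1}\to f^*T_X$ is a subbundle inclusion, equivalently when the normal sheaf $N_f$ is locally free. The basic observation is that $df$ automatically factors through $P$, since $\Hom(T_{\P^1},\O_{\P^1})=H^0(\O_{\P^1}(-2))=0$ annihilates the trivial summands of $f^*T_X$. I would then consider the ramification locus $R=\{(g,p):dg_p=0\}\subset\mathrm{Mor}(\P^1,X)\times\P^1$ and show it cannot dominate $\mathrm{Mor}(\P^1,X)$: freeness lets one prescribe the $1$-jet of a deformation along $P$ at any chosen point, so the condition $dg_p=0$ drops the dimension by $\mathrm{rk}\,P$ at each $p$, and the room available when $n\ge 2$ makes $\dim R<\dim\mathrm{Mor}(\P^1,X)$. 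Thus the general member, in particular $f$, is everywhere unramified.

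Then, embedding. In characteristic zero a proper injective immersion of a smooth curve is a closed embedding, so it remains to rule out self-intersections. I would study the double-point locus $D=\{(g,p,q):p\ne q,\ g(p)=g(q)\}\subset\mathrm{Mor}(\P^1,X)\times(\P^1\times\P^1\setminus\Delta)$. Evaluating at two distinct points, and again using freeness to move the secant conditions inside $P$, one bounds $\dim D$; the extra unit of codimension that becomes available precisely when $n\ge 3$ makes $D\to\mathrm{Mor}(\P^1,X)$ non-dominant, so the general member is injective and hence an embedding.

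The hard part is the interference of the degree-zero summands of $f^*T_X$: a general covering curve is free but need not be very free, and the trivial summands prevent the jet- and two-point evaluation maps from being submersive, collapsing the naive expected codimensions down to $\mathrm{rk}\,P$. The crux is therefore to confirm that ramification and secancy are genuinely controlled by the positive part alone and that $\mathrm{rk}\,P$ is large enough in the relevant range; the residual low-degree cases, where $\delta$ is as small as $2$, are exactly the ones dispatched by the unfolding and unkinking constructions recalled here, which manufacture the missing positivity by attaching free teeth to $f$ before specializing back.
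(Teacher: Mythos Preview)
Your approach differs from the paper's. You set up incidence loci (the ramification locus $R$ and the double-point locus $D$) and try to show they fail to dominate $\mathrm{Mor}(\P^1,X)$ by bounding their codimension via $\mathrm{rk}\,P$. The paper instead argues by choosing a single hyperplane $L\subset T_X(x)$ containing the relevant small subspace --- the image of the leading jet at a ramification point, or $df(T_{\P^1}(p_1))+df(T_{\P^1}(p_2))$ at a double point --- and forming the colength-one elementary modification $M\subset f^*T_X$ with cokernel the skyscraper $T_X(x)/L$. Semipositivity of $f^*T_X$ gives $h^1(M)=0$ and hence $h^0(M)=h^0(f^*T_X)-1$ directly, and the contradiction is that every first-order deformation lies in $H^0(M)$. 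The numerical hypothesis $n\geq 3$ (resp.\ $n\geq 2$) appears only as the existence of a hyperplane containing a $2$-dimensional (resp.\ $1$-dimensional) subspace; no estimate on $\mathrm{rk}\,P$ is invoked.

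Your proposal has a genuine gap at the claim ``for the general member the splitting type is the generic one, so the positive part $P$ has as large a rank as the constraints permit.'' This is not true: the generic splitting of $f^*T_X$ over a component of $\mathrm{Mor}(\P^1,X)$ is dictated by $X$, not by the abstract moduli of bundles of that degree, and need not be balanced. For instance, on a product $E\times\P^{n-1}$ with $E$ elliptic, every filling rational curve has a forced trivial summand from $T_E$, so $\mathrm{rk}\,P\leq n-1$ no matter how general $f$ is, and your bound $\dim R\leq\dim\mathrm{Mor}+1-\mathrm{rk}\,P$ gives nothing from $n\geq 2$ alone. The fallback you gesture at --- attaching free teeth to boost positivity and then specializing back --- is a genuine construction in Koll\'ar's book, but it is not what the paper does here, and your sketch does not actually carry it out.
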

\begin{proof}
Recall that a bundle on $\P^1$ is semipositive when it is 
a direct sum of line bundles of nonnegative degrees.
Note that our filling hypothesis implies- indeed is equivalent to - the property that $f^*(T_X)$
is semipositive. We show $f$ is
injective if $n\geq 3$. Suppose for contradiction $p_1\neq p_2\in\P^1$ are such that
$f(p_1)=f(p_2)=x$. Then, for any first-order deformation of $f$, the associated  section $s$
of $f^*(T_X)$ has the property that $s(p_1)=s(p_2)\in T_{X}(x)=T_{X, x}\otimes\C(x)$. Let $L$ be any hyperplane in 
$T_{X}(x)$ containing $df(T_{\P^1}(p_1))+df(T_{\P^1}(p_1) $. There is an associated colength-1
subsheaf $M\subset f^*(T_X)$ such that $f^*(T_X)/M$ is the skyscraper sheaf $T_X(x)/L$ at $x$,
and then $s\in H^0(M)$. But since $M$ is a direct sum of line bundles of degree $\geq -1$,
we have
\[h^1(M)=0=h^1(f^*(T_X)), \ \ h^0(M)<h^0(f^*(T_X)).\]
This contradicts the fact that $f$ is general in an unobstructed family of dimension
$h^0(f^*(T_X))$.\par
The argument that $f$ is unramified in $n\geq 2$ proceeds similarly: if $f$ is ramified at 
$p\in\P^1$, there exists $k\geq 2$ and an injection $\sym^k(T_{\P^1}(p))\to T_X(f(p))$
such that the value at $p$ of any first-order deformation of $f$ is contained in the image.
Then we can take for $L$ any hyperplane containing the image and argue as above.
\end{proof}
\section{Quasi-cones}
A quasi-cone in $\P^n$ is by definition a hypersurface  with equation
\[F=Q(F_d, F_{d-1})=F_d(x_1,...,x_n)+x_0F_{d-1}(x_1,...,x_n)\]
where $x_0,...,x_n$ are homogeneous coordinates and  $F_d, F_{d-1}$ are homogeneous polynomials
of degree $d, d-1$ respectively. 
This is equivalent to saying $F$ has degree $d$ and multiplicity $d-1$ at $p=[1,0,...,0]$.
Note that $F_{d-1}$ is uniquely determined by $F$ while $F_d$
is uniquely determined mod $F_{d-1}$. 
Let \[b:\tilde P\to\P^n\] denote the blowup of $p$,
with exceptional divisor $E\simeq\P^{n-1}$, and let
\[\pi:\tilde P\to\P^{n-1}\]
denote the natural projection, which is an isomorphism on $E$.\par
Now let $\bar X\subset\P^n$ be a quasi-cone with equation $Q(F_d, F_{d-1})$ as above, and let
\[b_X:X\to\bar X\]
be the proper transform of $\bar X$ in $\tilde P$, with exceptional divisor $E_X=X\cap E \simeq F_{d-1}$.
Via $\pi_X$, $X$ can be realized as the blowup of $\P^{n-1}$ in $Y=V(F_{d-1})\cap V(F_d)$.
In fact the rational map $b_X\circ\pi_X\inv:\P^{n-1}\cdots\to\P^n$ corresponds to the linear system
on $\P^{n-1}$ with base locus $Y$ generated by $x_1F_{d-1}, ...,x_nF_{d-1}, F_d$.
$\bar X$ is said to be \emph{quasi-smooth}
 if $Y$ is smooth.
In this case it is easy to see that $X$ is smooth.
\par Let $\tilde Y\subset X$ be the exceptional divisor of $\phi_X$, which is isomorphic
to the projectivized conormal bundle of $Y$ in $\P^{n-1}$, and maps to the ruled subvariety $\bar Y\subset\bar X$
defined by $F_{d-1}, F_d$, contracting
 the 'infinity section' $\tilde Y\cap E_X$.\par
Now let $L, H$ be the line bundles on $X$ pulled back respectively from
the hyperplane bundles on  $\P^n, \P^{n-1}$. Then we have
\[L=dH-\tilde Y,\]
\[\tilde F=(d-1)H-\tilde Y,\]
and consequently
\[L=\tilde F+H.\]
The following is worth noting
\begin{lem}
If $d\leq n-1$  and $\bar X$ is quasi-smooth then $X$ is Fano; in fact, $-K_X$ is very ample.
\end{lem}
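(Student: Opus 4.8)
The plan is to pin down $-K_X$ as an explicit combination of the two natural pullback classes $L$ and $H$, and then recognize it as a very ample class plus a base-point-free class. First I would record that quasi-smoothness ($Y=V(F_{d-1})\cap V(F_d)$ smooth) makes $Y$ a smooth codimension-$2$ center, so the blowup $\pi_X\colon X\to\P^{n-1}$ of $Y$ is smooth and the adjunction/blowup formula applies. Since $Y$ has codimension $2$, the exceptional-divisor discrepancy is $1$, giving $K_X=\pi_X^*K_{\P^{n-1}}+\tilde Y=-nH+\tilde Y$. Combining this with the relation $L=dH-\tilde Y$ already established above (so $\tilde Y=dH-L$) yields
\[
-K_X=nH-\tilde Y=(n-d)H+L .
\]

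Next I would prove that $L+H$ is very ample. The point is that $L=b_X^*\O_{\P^n}(1)$ and $H=\pi_X^*\O_{\P^{n-1}}(1)$, so the combined morphism $(b_X,\pi_X)\colon X\to\P^n\times\P^{n-1}$ satisfies $(b_X,\pi_X)^*\O(1,1)=L+H$. This morphism is a closed embedding: it is the restriction to the closed subvariety $X\subset\tilde P$ of the standard closed embedding $\tilde P=\mathrm{Bl}_p\P^n\hookrightarrow\P^n\times\P^{n-1}$ realizing the blowup as the closure of the graph of projection from $p$. Since $\O(1,1)$ is very ample on $\P^n\times\P^{n-1}$ (Segre), its pullback $L+H$ under a closed embedding is very ample on $X$.

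Finally, using $d\leq n-1$ I would write
\[
-K_X=(L+H)+(n-1-d)H,\qquad n-1-d\geq 0 .
\]
Here $H=\pi_X^*\O_{\P^{n-1}}(1)$ is globally generated, and adding a base-point-free class to a very ample class preserves very ampleness; hence $-K_X$ is very ample, in particular ample, so $X$ is Fano.

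The computations are routine, so the only genuine content lies in two places. The conceptually important observation is that \emph{neither $L$ nor $H$ is ample by itself} — $b_X$ contracts a subvariety of $\tilde Y$ and $\pi_X$ contracts $\tilde Y$ — so the argument must combine them, and the product embedding is exactly what does this. The two facts that actually require the hypotheses are: the codimension-$2$ smoothness of $Y$ (supplied by quasi-smoothness), needed for the blowup canonical-bundle formula; and the inequality $n-1-d\geq 0$, needed so that the leftover multiple of $H$ is effective and the very-ample-plus-base-point-free bootstrap goes through. I expect the mild subtlety to be only bookkeeping in the identity $-K_X=(n-d)H+L$, which one can equivalently cross-check by adjunction on $\tilde P$ using $[X]=d\mathcal H-(d-1)E$ and $K_{\tilde P}=-(n+1)\mathcal H+(n-1)E$ together with $H=(\mathcal H-E)|_X$.
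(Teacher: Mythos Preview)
Your proof is correct and follows essentially the same route as the paper: compute $-K_X=(n-d)H+L$ and use the product embedding $(b_X,\pi_X)\colon X\hookrightarrow\P^n\times\P^{n-1}$. The only cosmetic difference is that the paper observes directly that $-K_X=(b\times\pi)^*\O(1,n-d)$ is the pullback of a very ample bundle on the product (since both exponents are $\geq 1$ when $d\leq n-1$), whereas you first pull back $\O(1,1)$ to get $L+H$ very ample and then add the base-point-free class $(n-1-d)H$; these are trivially equivalent.
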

\begin{proof}
$-K_X=nH-\tilde Y=(n-d)H+L$. The map
\[b\times\pi:X\to \P^n\times\P^{n-1}\]
is clearly an embedding, hence $-K_X=(b\times\pi)^*(\O(n-d, 1))$ is very ample.
\end{proof}
Now let $C$ be an irreducible  curve in $X$ corresponding to a curve in $\bar X$ of degree $e$ 
and multiplicity $m$ at $p$. Then
\[C.L=e, C.\tilde F=m.\]
Consequently, 
\[C.H=e-m, C.E=(d-1)(e-m)-m.\]
Thus, $C$ maps to a curve $C_*$  in $\P^{n-1}$ that has degree $e-m$ and meets $Y$ in $(d-1)(e-m)-m$ points
(among the $(d-1)(e-m)$ intersections of $C_*$ and $F_{d-1}$).

For instance, when $m=e-1$, $C_*$ is a $(d-e)$-secant line of $Y$. \par
Next, we make an elementary remark about curves in $\tilde Y$, considered as 'infinitely-near
secants' to $Y$. This result will not be needed later, but it is psychologically important.
To begin with note that 
\[\tY=\P(\check N_Y)=\P(\O_Y((-d+1)H, -dH))\]
and that
\[\O_{\tY}(\tY)=\O_{\tY}(-1)=\O_{\P(\check N_Y)}(-1).\]
Now let $C\to \tilde Y$
be an irrecudible curve. Such a curve corresponds to a map $f:C\to Y$ together
with an invertible quotient $f^*(\check N_Y)\to A$, and clearly
\[A=\O_{\tY}(1).C.\]
Now clearly the smallest-degree invertible quotient of $\check N_Y$ is $\O(-dH)$ which corresponds
to the 'infinity section' $\tY\cap\tilde F$. For any curve not contained in the infinity section,
the degree of the quotient $A$ is at least  $(-d+1)\deg(H)$. Thus we conclude
\begin{lem}[Infinitely-near secants]\label{inf-near-sec-lem}
For an irreducible curve $C\to\tY$, 
not contained in $\tilde F$, we have
\[C.\tY\leq (d-1)C.H.\]
If $C$ is  contained in $\tilde F$, we have
\[C.\tY= dC.H.\]
\end{lem}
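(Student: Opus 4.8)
The plan is to reduce both assertions to a single computation of the degree of the quotient line bundle $A$ attached to the curve $C\to\tY$, and then to read that degree off the direct-sum structure of the conormal bundle $\check N_Y$.

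First I would record the dictionary already assembled above. Since $\O_{\tY}(\tY)=\O_{\tY}(-1)$, and since the datum of $C\to\tY$ is a map $f:C\to Y$ together with a tautological invertible quotient $f^*(\check N_Y)\to A$ for which $A=\O_{\tY}(1).C$, restricting to $C$ gives
\[C.\tY=\deg_C\big(\O_{\tY}(-1)|_C\big)=-\deg A.\]
Thus the two statements of the lemma are equivalent to $\deg A\geq -(d-1)(C.H)$ when $C\not\subset\tilde F$, and $\deg A=-d\,(C.H)$ when $C\subset\tilde F$; everything reduces to controlling $\deg A$.

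Next I would use that $Y=V(F_{d-1})\cap V(F_d)$ is a complete intersection, so its conormal bundle splits as $\check N_Y=M_1\oplus M_2$ with $M_1=\O_Y(-(d-1)H)$ and $M_2=\O_Y(-dH)$, the classes of $F_{d-1}$ and $F_d$ furnishing the two summands. The infinity section $\tY\cap\tilde F$ is exactly the section cut out by the smallest-degree quotient, i.e. by the projection $\check N_Y\to M_2$; equivalently, a point of $\tY$ lies on the infinity section precisely when the kernel of its defining quotient is the fibre of $M_1$. Pulling back along $f$ and writing $\delta=C.H$, we get $f^*M_1=\O_C(-(d-1)\delta)$, $f^*M_2=\O_C(-d\delta)$, and a surjection $\phi:f^*M_1\oplus f^*M_2\to A$. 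The two cases now fall out. If $C\subset\tilde F$, then $C$ maps into the infinity section, $\phi$ is the pulled-back projection onto $M_2$, so $A=f^*M_2$, $\deg A=-d\delta$, and $C.\tY=d\,(C.H)$. If $C\not\subset\tilde F$, then by irreducibility the generic point of $C$ maps off the infinity section, so the kernel of $\phi$ is generically distinct from $f^*M_1$; equivalently the restriction $\phi|_{f^*M_1}:\O_C(-(d-1)\delta)\to A$ is a nonzero map of line bundles on the integral curve $C$, which forces $\deg A\geq -(d-1)\delta$ and hence $C.\tY=-\deg A\leq (d-1)(C.H)$.

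The computation is short once this dictionary is in place; the only point needing care—and the one I would treat as the main (if modest) obstacle—is the bookkeeping of the $\P(\cdot)$ convention. One must verify that the infinity section really corresponds to the projection onto the \emph{lower}-degree summand $M_2=\O_Y(-dH)$, so that leaving it activates the higher-degree summand $M_1$ and produces the sharp bound $(d-1)(C.H)$, and one must apply $A=\O_{\tY}(1).C$ with the correct sign, since it is the opposite twist $\O_{\tY}(\tY)=\O_{\tY}(-1)$ that enters the intersection number. Beyond that, the argument rests only on the standard fact that a nonzero map of line bundles on an integral projective curve does not decrease degree.
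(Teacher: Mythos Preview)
Your proposal is correct and follows essentially the same approach as the paper: the paper's argument is contained in the paragraph immediately preceding the lemma, where it identifies $A=\O_{\tY}(1).C$, notes that the smallest-degree quotient $\O(-dH)$ of $\check N_Y$ corresponds to the infinity section $\tY\cap\tilde F$, and observes that for a curve not in that section the quotient has degree at least $(-d+1)\deg(H)$. Your write-up makes explicit the reason behind that last inequality (the nonvanishing of $\phi|_{f^*M_1}$ forces $\deg A\geq\deg f^*M_1$), which the paper leaves implicit, but the logic is identical.
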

\section{Fans and their hypersurfaces}
In this section we fix a dimension $n$. Let $\tilde\P^n$
denote the blowup of $\P^n$ at a point.
\begin{defn}
	A fan (of fibre length $m$ or $m$-fan) is a normal-crossing variety of the form
	\[P_1\cup...\cup P_m\]
	where $P_1\simeq...\simeq P_{m-1}\simeq\tilde\P^n$, $P_m\simeq\P^n$
	and for $i=1,...,m-1$,
	 $P_i\cap P_{i+1}$ is the exceptional divisor in $P_i$ and
	a hyperplane in $P_{i+1}$ not meeting its exceptional divisor if any.\par
	A relative fan is a flat proper morphism $\cP\to T$ whose fibres are fans
	(NB the fibre length of these fans may vary). 
	\end{defn}
Our interest in fans stems from the fact that they are degenerate forms of $\P^n$,
as illustrated by 
the following construction of a relative fan with general fibre a 1-fan 
(i.e. $\P^n$) and special fibre a 2-fan.
Let $(B, 0_B)$ be a smooth affine pointed curve
(e.g. $\A^1$) with coordinate $t$ and let $\cP(1)/B$ denote the blow-up 
of $\P^n\times B$ in $(0,0_B)$.
The fibre of $\cP(1)$ over $0_B\in B$, 
is a 2-fan $P_1\cup P_2$ where $P_1$ is the blowup of $\P^n\times 0_B$ in $(0, 0_B)$,
with exceptional divisor $E_1\simeq\P^{n-1}$ and $P_2$ is
the exceptional divisor, which is a copy of $\P^n$ containing
$E_1=P_1\cap P_2$ as a hyperplane. Thus $\cP(1)/B$ itself is a relative fan
(of max fibre length 2).  We will also denote by $E_0\subset P_1$ the pullback of a general hyperplane 
from $\P^n$.\par
 The above construction can be iterated
to yield for all $M\geq 1$ a relative fan $\cP(M)/T=B^M$ called a \emph{standard fan}
with smooth total space and
 fibres $m$-fans for $1\leq m\leq M+1$. Inductively,
$\cP(M+1)$ is the blowup of $\cP(M)\times B$ in $0_M\times B^M\times 0_B$ where
$0_M$ is a point in the 'last component'  $P_{M+1}$ of $\cP(M)$ over $0_B^M$ which is
not in $E_M$.  An $m$-fan fibre of $\cP(M)$ is determined up to locally trivial deformation
(and even up to isomorphism) by a sequence $(i_1<...<i_m)\subset [1, M+1]$ and has the form
$P'_1\cup...\cup P'_m$ where each $P'_j$ is a smoothing of $P_{i_j}\cup...\cup P_{i_{j+1}-1}$
for $j<m$ or $P_{i_m}\cup...\cup P_{M+1}$ for $j=m$.  We call this a fibre of type $(i_\bullet)$.  \par
Note that the base $B^M$ of $\cP(M) $ is endowed with a natural coordinate
stratification so that over the $i$th coordinate
hyperplane $t_i=0$, the $i$-th 'joint' $E_i$ forms a flat family
whose general fibre is a 2-fan with double locus $E_i$. 
The pullback of $t_i=0$ is a sum of 2 divisors $D_i^-, D_i^+$
whose fibres over a given point consist of all fan components
'below' or 'above' $E_i$ respectively.
The fact that $E_i$ has opposite normal bundles in $P_i$ and $P_{i+1}$
is a direct consequence of the smoothness of the total space of the relative fan $\cP$:
indeed we have
\[ N_{E_i/P_i}=N_{P_{i+1}/\cP}|_{E_i}, N_{E_i/P_{i+1}}=N_{P_{i}/\cP}|_{E_i},\]
however, if $F$ is a complete fibre, we have
\[N_{P_{i}/\cP}|_{E_i}\otimes N_{P_{i+1}/\cP}|_{E_i}=
N_{F/\cP}|_{E_i}=\O_{E_i}.\]

There is a useful operation on relative fans called  \emph{stretching}.
Given a relative fan $\cP/T$ with smooth total space,
plus a component 
$E_i$ of the  relative double locus,
living over a divisor $(t_i)\subset T$, the stretch $\cP^+_i/T^+$  
is defined as follows. Consider a base change $T^+\to T$
given locally by $s_i^2=t_i$. 
The pullback of $E_i$ becomes
singular, with local equation $s_i^2=xy$. 
Blowing this up, the total space $\cP^+_i$ becomes
smooth and the pullback of $E_i$ is a conic bundle 
$P_i'$ meeting the proper transforms of $P_i$
and $P_{i+1}$ in disjoint sections denoted $E'_i, E"_i$ whose respective normal
bundles in $P'_i$ are $\O_{E'_i}(1), \O_{E"_i}(-1)$. This easily implies that
\[P'_i=\P_{E'_i}(\O(1) \oplus\O)=\P_{E"_i}(\O\oplus\O(-1))\]
so again $P'_i$ is a $\P^n$ blown up at a point 
and $\cP^+_i/T^+$ is a relative fan,
with max fibre chain length increased by 1.\par
The stretch $\cP^+_i/T^+$
just constructed is a subvariety of a relative fan with larger base called
a \emph{big stretch} of a relative fan $\cP/T$ constructed 
as follows. Consider the base change \[T^{++}\to T, \ \  t_i=t'_it_i",\]
which has, locally over $E_i$ in the above setup, the equation \[t'_it"_i=xy.\]
Then blow up the Weil divisor defined 
locally $(t'_i, x)$ (geometrically and globally
this consists of the sum of the components of 
the pullback family over $(t'_i)$ lying to one side
of $E_i$).  This yields a new relative fan
 $\cP^{++}_i/T^{++}$ with smooth total space which
contains the regular stretch $\cP^+_i/T^+$ as the divisor
with equation  $t'_i=t_i"$. The same relative fan is obtained by blowing up
$(t"_i, y)$. The parameter 
$t'_i$ (resp. $t"_i$) cuts out on $\cP^{++}/T^{++}$ the divisor 
$E'_i$ (resp. $E"_i$).
It is easy to check that a big stretch of a standard relative fan $\cP(m)/B^m$ is $\cP(m+1)/B^{m+1}$.\par
The following result shows that every relative fan is essentially standard:
\begin{prop}\label{curve-fan-prop}
Locally at each point of $B^m$, $\cP(m)$ is a versal deformation of its fibres. At $0_B^m$, $\cP(m)/B^m$
is miniversal.
\end{prop}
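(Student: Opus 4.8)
The plan is to reduce the statement to a computation of the Kodaira-Spencer map of $\cP(m)/B^m$ against the first-order deformation space $T^1_F:=\ext^1_{\O_F}(\Omega^1_F,\O_F)$ of a fan fibre $F$. Since $B^m$ is smooth, it suffices, by the usual criterion, to show that the Kodaira-Spencer map $\kappa_b\colon T_bB^m\to T^1_{F_b}$ is surjective at every $b$ (versality) and bijective at $0_B^m$ (miniversality). I will read off $T^1_F$ from the local-to-global sequence
\[0\to H^1(F,\Theta_F)\to T^1_F\to H^0(F,\mathcal{T}^1_F)\xrightarrow{\ \delta\ }H^2(F,\Theta_F),\]
where $\Theta_F=\hom(\Omega^1_F,\O_F)$ is the tangent sheaf and $\mathcal{T}^1_F=\mathcal{E}xt^1(\Omega^1_F,\O_F)$ is supported on the double locus.

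The local term is already in hand. A fan is a chain, so its double locus is the disjoint union of the joints $E_j\simeq\P^{n-1}$, with no triple points; hence $\mathcal{T}^1_F$ is a line bundle on each $E_j$ equal to $N_{E_j/P_j}\otimes N_{E_j/P_{j+1}}$. The opposite-normal-bundle identity $N_{E_j/P_j}\otimes N_{E_j/P_{j+1}}=\O_{E_j}$ proved above gives $\mathcal{T}^1_F|_{E_j}\simeq\O_{E_j}$, so $H^0(F,\mathcal{T}^1_F)=\bigoplus_j H^0(E_j,\O_{E_j})=\C^{\,r}$ where $r$ is the number of joints. Over a codimension-$k$ stratum $r=k$, and over $0_B^m$ we get $r=m=\dim B^m$; this equality is the numerical core of the assertion.

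The main work is the vanishing $H^1(F,\Theta_F)=0$, which says $F$ has no locally trivial deformations and forces $T^1_F\hookrightarrow H^0(\mathcal{T}^1_F)$. I would deduce it from the tangent-sheaf sequence of a normal-crossing variety,
\[0\to\Theta_F\to\bigoplus_i\Theta_{P_i}(-\log D_i)\xrightarrow{\ \rho\ }\bigoplus_j\Theta_{E_j}\to 0,\]
where $D_i\subset P_i$ is the part of the double locus carried by $P_i$ (a disjoint hyperplane and exceptional divisor when $P_i\simeq\tilde\P^n$, a single hyperplane when $P_i\simeq\P^n$) and $\rho$ is the difference of the two restrictions at each joint. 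The cohomology sequence then reduces the vanishing to (A) the component estimates $H^1(P_i,\Theta_{P_i}(-\log D_i))=0$ and (B) surjectivity of $H^0(\rho)$. Part (A) is a finite, homogeneous computation on $\P^n$ and its one-point blowup via the log-tangent sequence, using that $H^0(\Theta_{P_i})\to H^0(N_{D_i})$ is onto (each hyperplane and the exceptional divisor move in their full linear systems); this is the one genuinely delicate point. For (B) I would run a triangular induction along the chain, starting at the end component $P_{m+1}\simeq\P^n$, whose affine group of the hyperplane $E_m$ restricts onto $\mathrm{PGL}(E_m)$, hence onto $H^0(\Theta_{E_m})=\mathfrak{pgl}_n$, and then correcting inward joint by joint.

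Finally I compute $\kappa_b$. Near each joint $E_j$ the smooth total space $\cP(m)$ has local equation $t_j=xy$, the miniversal smoothing of a node in the $t_j$-direction and rigid otherwise; hence the composite $T_bB^m\xrightarrow{\kappa_b}T^1_{F_b}\hookrightarrow H^0(\mathcal{T}^1_{F_b})$ carries $\partial/\partial t_j$ to a generator of the $E_j$-summand and the other coordinate directions to $0$. Over $0_B^m$ this composite $\C^m\to\C^m$ is diagonal and invertible, so both arrows are isomorphisms, $\kappa_0$ is bijective, and the family is miniversal. Over any other point, lying in a codimension-$k$ stratum, the $k$ vanishing-coordinate directions map isomorphically onto $H^0(\mathcal{T}^1_{F_b})\simeq T^1_{F_b}$, so $\kappa_b$ is onto and the family is versal. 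The steps I expect to demand real care are the component vanishings (A) and the bookkeeping in (B); everything else is formal.
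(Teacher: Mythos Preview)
Your approach is essentially the same as the paper's: identify $\mathcal{T}^1_F=\bigoplus\O_{E_j}$, argue $H^1(\Theta_F)=0$, and conclude by surjectivity of the Kodaira--Spencer map. The paper's proof is extremely terse---it simply asserts $H^1(P,T^0_P)=H^2(P,T^0_P)=0$ as ``clear'' and moves on---whereas you actually sketch how to get the $H^1$ vanishing via the log-tangent sequence along the chain. One small difference worth noting: the paper invokes $H^2(\Theta_F)=0$ to kill obstructions, while you bypass this by observing that the composite $T_0B^m\to T^1_F\hookrightarrow H^0(\mathcal{T}^1_F)$ is already an isomorphism, forcing the hull to be smooth of dimension $m$; this is a clean way to avoid the $H^2$ computation, and once you have smoothness of the hull at the origin the versality at other strata follows as you indicate.
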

\begin{proof}
Clearly locally trivial deformations of a fan $P$ are trivial, i.e. $H^1(P, T^0_P)=0$ and likewise for $H^2$. 
On the other hand the $T^1$ sheaf is clearly $\bigoplus \O_{E_i}$ because the singularity
of $P$ along each $E_i$ is just an ordinary curve double points times $E_i$.
This shows that the natural 'Kodaira-Spencer' map $T_tT\to H^0(P, T^1_P)$ is surjective and even an isomorphism
if $t=0_B^m$. This proves our assertion.
\end{proof}
Next we discuss hypersurfaces on fans.
Let $(d_\bullet)=(d_1\geq d_2\geq...\geq d_m\geq 0)$ be a 
decreasing sequence of integers. Consider the line bundle
on $\cP(m)$:
\[\O(d_1,...,d_m)=\O(d_1D_1^+-\sum\limits_{i=2}^md_iD^+_i).\]
This is a line bundle whose restriction on each fibre component $P_i$ is the line bundle
$\O(d_iE_i-d_{i+1}E_{i+1})$ or $\O(d_m)$ if $i=m$. 
On the generic fibre, it is just $\O(d_1)$. Note that this bundle
has no higher cohomology on any fibre, hence locally constant sections:
in fact it suffices to prove this for the most special fibre, 
an $m$-fan, and the latter can be easily done by induction
on $m$, writing the $m$-fan as $P_1\cup(P_2\cup...\cup P_m)$
and using a Mayer-Vietoris sequence.
Also this bundle is clearly very ample if the sequence 
$(d_\bullet)$ is strictly decreasing. 
Finally, note that the restriction
of $\O(d_\bullet)$ to a fibre of type $(i_\bullet)$ can be identified with $\O(d_0,d_{i_\bullet})$.
\section{Curves on fan hypersurfaces}
Let $\pi:\cP\to B$ be a relative fan and let $\cX\subset\cP/B$ be a relative hypersurface  not containing any fibre component of $\pi$. 
We denote by $\cM_0(\cX, e)$
the Kontsevich space of stable genus-0 maps to fibres of $\cX/B$ having the homology class
of a degree-$e$ curve on a general fibre (which equals the homology class of a degree-$e$ curve
on the bottom component of any fibre that is disjoint from all other components, that is, $e(D_0^+)^{n-1})$. 

\begin{prop}\label{no-E-prop}
Assume the fibre $X_0$ of $\cX/B$ over $0\in B$ is a general hypersurface of type $(d.)$
of the fan $P_\bullet=\pi\inv(0)$,
let $\cM_*$ be a component of $\cM_0(\cX,e)$ that surjects to $B$,
and let $f:C\to X_0$ correspond to a general member of the fibre of $\cM_*$ over $0$.
Then no component of $C$ maps with nonzero degree to the singular locus of $X_0$.
\end{prop}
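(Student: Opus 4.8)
The plan is to argue by a dimension count, comparing the dimension of the space of stable maps whose image has a component mapping with positive degree into the singular locus $\Sigma = \mathrm{Sing}(X_0)$ against the dimension $\dim\cM_*$, and to show the former is too small to dominate $B$. Since $X_0$ is a general hypersurface of type $(d_\bullet)$ on the fan $P_\bullet$, its singular locus is concentrated on the double loci $E_i = P_i \cap P_{i+1}$; away from these joints each component $X_0 \cap P_i$ is a general hypersurface on $\tilde\P^n$ (or $\P^n$) and hence smooth. So $\Sigma \subseteq \bigcup_i (X_0 \cap E_i)$, and a component $C'$ of $C$ mapping with nonzero degree to $\Sigma$ must map into one of the $X_0 \cap E_i$, which is itself a hypersurface of the appropriate degree inside $E_i \simeq \P^{n-1}$.

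First I would set up the local deformation theory of the family $\cX/B$ near such a map $f$. Because $\cP(m)/B^m$ is miniversal for its fibres (Proposition \ref{curve-fan-prop}), and $\cX$ is a general relative hypersurface, I can control the normal bundle $N_{\cX/\cP}$ and the way $\cX$ moves with $B$; in particular a general $X_0$ meets each $E_i$ transversally in a smooth hypersurface of $E_i$. The key point is that the locus in the Kontsevich space parametrizing maps with a positive-degree component landing in $\Sigma$ is a constrained sublocus: such a curve must satisfy incidence with the fixed codimension-$1$ variety $X_0 \cap E_i$ inside the ambient $E_i \simeq \P^{n-1}$, and this is a nontrivial closed condition. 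I would bound $\dim\cM_*$ by the expected dimension of genus-$0$ maps of class $e$, which on a fibre is controlled by $H^0(f^*T_{X_0})$ in the unobstructed (general) case, and bound the bad sublocus by the dimension of maps to $E_i$ of the relevant class together with the penalty for being forced into the singular hypersurface.

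The main obstacle I expect is making the dimension comparison genuinely uniform over the whole relative family, rather than fiberwise. Because the fan varies in type $(i_\bullet)$ over the stratification of $B^m$, and because balancing/transversality of $X_0$ is only generic, I must check that the bad locus cannot conspire with a positive-dimensional subfamily of degenerate fibers to surject onto $B$. Concretely, the hard step is to show that requiring a component to map into $X_0 \cap E_i$ drops the dimension by \emph{strictly} more than $1$ relative to $\cM_*$, so that even after accounting for the one-parameter degeneration into the special fibre the bad locus cannot dominate $B$. I would handle this by combining the transversality of $X_0$ with $E_i$ (so that $X_0 \cap E_i$ has the expected codimension in $E_i$) with the normal-bundle positivity established in \S1, comparing $h^0(N_{C'/E_i})$ against $h^0(N_{C'/X_0})$ and exploiting that the joint $E_i$ has opposite normal bundles $\O(1)$ and $\O(-1)$ in the two adjacent components (the relation $N_{P_i/\cP}|_{E_i}\otimes N_{P_{i+1}/\cP}|_{E_i}=\O_{E_i}$ recorded above), which obstructs free movement of a curve pinned inside $E_i$ away from the special fibre.
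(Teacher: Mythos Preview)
Your approach is fundamentally different from the paper's, and it has a real gap at exactly the step you flag as hardest. The paper does not argue by a dimension count at all. It uses the \emph{stretching} operation of \S4: reducing to a 2-fan, if a component $C_1$ maps nonconstantly into the double locus $E$, then the local ideal of $E$ vanishes along the curve family to some finite order $m$; a single stretch (base change $s^2=t$ followed by a small resolution) lowers $m$, and any new limit components created in the process are checked not to land in the new joints (because the function $x/y$ becomes regular and nonconstant on them after blowing up). Iterating, one reaches a longer fan in which the transform of $C_1$ lies nonconstantly in one of the inserted middle components, away from every joint. Versality (Proposition~\ref{curve-fan-prop}) then lets one pass to a nearby general 2-fan fibre where this component remains nonconstant, giving the conclusion. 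The whole argument is birational in the total space and never compares moduli dimensions.

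For your dimension count to succeed you would need an upper bound on the dimension of the family of rational curves of the relevant class on the hypersurface $X_0\cap E_i\subset E_i\simeq\P^{n-1}$. No such bound is available at this point of the paper; controlling these dimensions is one of the goals of the later sections, so assuming it here would be circular. The tools you invoke do not fill the gap either: \S1 treats abstract balanced bundles on trees, not positivity of normal bundles of curves inside $E_i$; and since $X_0$ is singular precisely along $E_i$, the object $N_{C'/X_0}$ for $C'\subset E_i$ is not an ordinary normal bundle, so your comparison with $N_{C'/E_i}$ does not go through as stated. The opposite normal bundles of $E_i$ in the adjacent fan components encode only the codimension-$1$ condition of lying over $\{t_i=0\}\subset B$, which you already have; they contribute no further drop in dimension.
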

\begin{proof}
We may assume $P_\bullet$ is a 2-fan as the general case is similar. If $C_1$ is a component
of $C$ mapping nonconstantly to the singular locus $E$ of $X_0$, then the local  ideal of $E$
vanishes identically to some order $m$  on $C$. Then by a stretch operation we may reduce
$m$ successively until $m=0$. 
Note that in the stretch operation the limit curve may acquire new components, but these
do not map to the singular locus of the new fan: indeed if $xy$ is a local equation of the special fibre,
where $(x, y)$ are equations of the singular locus, extra components
arise at points where the function $x/y$ is indeterminate on the curve family, so after 
blowing up this function becomes locally regular and \emph{nonconstant}, which means
new components do not map into the singular locus.\par
Thus, after sufficient stretching of the relative fan,
 $C_1$  maps  nonconstantly into some component of a fan fibre other than the bottom one.
Then by going to a versal deformation of the special fan, we may consider a general
2-fan fibre and there $C_1$ still maps nonconstantly. This implies $f$ could not be constant to begin with. 
\end{proof}
\section{Multisecant rational curves of complete intersections}\label{secants}
Motivated by the construction of the preceding section, we now consider families of
multisecant rational curves
of general complete intersections.
For simplicity, we have stated only a particular codimension-2 case,
namely that of a $(d, d-1)$ complete intersection,
which is the one we need,
but the result and the proof are valid in greater generality.
We begin with the case of lines. 
\begin{prop}
Let $Y$ be a general $(d, d-1)$ complete intersection in $\P^{n-1}, n\geq 4 $ 
and let $a\leq d-1\leq n-2$. Then\par 
(i)  the locus $\Sec_a(Y)$ of  $a$-secant lines to $Y$ is 
irreducible reduced $(2n-4-a)$-dimensional, 
has a smooth normalization, 
and its general member meets $Y$ transversely in exactly $a$ points.\par
(ii) For a general point $q\in\P^{n-1}$, 
 the locus $\Sec_a(Y, q)$ of  $a$-secant lines to $Y$
 through $q$ is  $n-2-a$-dimensional and nonempty.

\end{prop}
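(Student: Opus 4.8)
The plan is to present $\Sec_a(Y)$, up to a birational modification, as the zero locus of an explicit section of a vector bundle on a relative Hilbert scheme, and to read off every assertion of (i) from the positivity of that bundle, which is exactly what the hypothesis $a\le d-1$ controls. Let $\mathbb{G}=\mathbb{G}(1,\P^{n-1})$ ($\dim\mathbb{G}=2n-4$), with universal line $\mathcal{U}\to\mathbb{G}$, and set $H=\Hilb^a(\mathcal{U}/\mathbb{G})$, whose fibre over $\ell$ is $\Hilb^a(\ell)=\P^a$, so $\dim H=2n-4+a$. On $H$ I form the evaluation bundles $\mathcal{B}_k$ ($k=d-1,d$), of rank $a$, with fibre $H^0(Z,\O(k)|_Z)$ at $(\ell,Z)$, and put $\mathcal{B}=\mathcal{B}_d\oplus\mathcal{B}_{d-1}$ of rank $2a$. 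Restriction of $F_d,F_{d-1}$ to $Z$ gives a section $\tau$ of $\mathcal{B}$ whose zero locus $W_a=\{(\ell,Z):Z\subset\ell\cap Y\}$ maps to $\Sec_a(Y)$ by $(\ell,Z)\mapsto\ell$. This map is birational (for a general $a$-secant line $Z$ is the reduced scheme $\ell\cap Y$) and, away from the lower-dimensional locus of lines contained in $Y$, finite, so it suffices to analyse $W_a$.

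Here is where the hypothesis enters. A length-$a$ subscheme of a line imposes independent conditions on forms of degree $k$ precisely when $k\ge a-1$; since $d-1\ge a$ this holds for both $k=d-1,d$, so the evaluation maps $H^0(\P^{n-1},\O(k))\to\mathcal{B}_k$ are surjective, i.e.\ $\mathcal{B}$ is globally generated by the global forms and is relatively ample over $\mathbb{G}$ (on a fibre $\P^a$ it is the tautological bundle of $\O(d)\oplus\O(d-1)$, ample because $d-1\ge a$). Generic transversality (Bertini applied to the base-point-free subsystem of sections of $\mathcal{B}$ coming from global forms) then shows that for general $(F_d,F_{d-1})$ the section $\tau$ is transverse, so $W_a$ is smooth of the expected dimension $2n-4-a$. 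Being smooth it is normal, and the birational, generically finite map to $\Sec_a(Y)$ exhibits the smooth normalization and forces $\Sec_a(Y)$ to be generically reduced of dimension $2n-4-a$. Since $\Sec_{a+1}(Y)$ is strictly smaller, the general member is not $(a+1)$-secant, and transversality of $\tau$ makes $Z=\ell\cap Y$ reduced; hence a general member meets $Y$ transversely in exactly $a$ points.

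Nonemptiness and irreducibility are the crux. Nonemptiness amounts to $c_{2a}(\mathcal{B})\ne 0$ in $H^{4a}(H)$, which I would derive from the positivity of $\mathcal{B}$ (nef, ample along the fibres of $H\to\mathbb{G}$): the classes $c_a(\mathcal{B}_d),c_a(\mathcal{B}_{d-1})$ are nonzero effective classes with nonzero product, the dimension check being $\dim H=2n-4+a\ge 2a$, i.e.\ $a\le 2n-4$. Irreducibility is the main obstacle, since smoothness of $W_a$ does not by itself give connectedness. I would obtain connectedness from a Fulton--Lazarsfeld-type theorem for zero loci of sufficiently positive bundles, applicable because $\dim H>\rk\mathcal{B}$ (indeed $a\le n-2<2n-4$) and the expected dimension $2n-4-a\ge n-2\ge 2$ is positive in our range; a smooth connected variety is irreducible, yielding (i). As a fallback not relying on a positivity--connectedness theorem, one can fibre the labelled incidence $\{(\ell;p_1,\dots,p_a):\text{distinct }p_i\in\ell\cap Y\}$ over $Y$ by the last secant point and induct on $a$, the base case being the $\P^{n-2}$-bundle $\Sec_1$; the inductive fibres are multisecant loci for the projection of $Y$ from a point, and establishing their irreducibility is exactly the delicate step.

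Finally, part (ii) follows from (i) by a sweeping count. The $a$-secant lines form a family of dimension $2n-4-a$, so their union in $\P^{n-1}$ has dimension at most $2n-3-a$; since $a\le n-2$ (from $a\le d-1\le n-2$) this is at least $n-1$, whence the union is all of $\P^{n-1}$ and a general $q$ lies on an $a$-secant line. The incidence $\{(\ell,q):q\in\ell\in\Sec_a(Y)\}$ has dimension $2n-3-a$ and dominates $\P^{n-1}$, so its general fibre $\Sec_a(Y,q)$ is nonempty of dimension $(2n-3-a)-(n-1)=n-2-a$, as asserted.
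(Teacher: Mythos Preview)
Your setup with the bundle $\mathcal{B}=\mathcal{B}_d\oplus\mathcal{B}_{d-1}$ on $H=\Hilb^a(\mathcal{U}/\mathbb{G})$ is correct, and the generic transversality argument does give smoothness of $W_a$ and the expected dimension. The gap is in the irreducibility step: the bundle $\mathcal{B}$ is \emph{not} ample, not even on a fibre $\P^a$, so Fulton--Lazarsfeld connectedness does not apply. To see this, fix a line $\ell$ and a point $p\in\ell$; the locus $\{Z\in\Hilb^a(\ell):p\in Z\}\cong\P^{a-1}$ is a divisor on which evaluation at $p$ gives a surjection $\mathcal{B}_k\twoheadrightarrow\O(k)|_p\otimes\O$, a trivial line-bundle quotient. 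Hence $\mathcal{B}_k$ (and $\mathcal{B}$) has a trivial quotient along a positive-dimensional subvariety, contradicting ampleness. Your claim ``ample because $d-1\ge a$'' conflates global generation with ampleness. Your fallback induction you yourself flag as incomplete, and indeed the projection-from-a-point step does not reduce to the same statement in lower dimension.

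There is also a gap in (ii): ``the union has dimension at most $2n-3-a$, and $2n-3-a\ge n-1$, whence the union is all of $\P^{n-1}$'' is a non sequitur. The inequality is necessary, not sufficient, for the incidence to dominate $\P^{n-1}$; you still have to prove that through a general point there actually passes an $a$-secant line.

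The paper avoids both difficulties by working relatively. For (i) it considers the total incidence $W_d=\{(L,A,F_{d-1},F_d)\}$ over the parameter space $U_d$ of pairs $(F_{d-1},F_d)$; this $W_d$ is manifestly irreducible (an affine bundle over the space of pairs $(L,A)$), and one argues by Stein factorization: if the general fibre over $U_d$ were reducible, simple connectivity of $U_d$ forces a codimension-$1$ branch locus $Z\subset U_d$ over which the fibre has a multiple component, and then one checks (using that a general such secant line still passes through a general point) that the fibre is actually smooth there, a contradiction. For (ii) the paper proves directly, by an argument on a cone swept by a $1$-parameter family of $a$-secants, that any such family contains an $(a{+}1)$-secant whenever $a<\min(c,d)$; inducting on $a$ this shows the $a$-secants fill $\P^{n-1}$, which is exactly the dominance you were missing.
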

\begin{proof} 

Let $U_d$ be the space of pairs of hypersurfaces $(F_{d-1}, F_d)$ such that $Y=V(F_{d-1}, F_d)$ has at most 
one singular point at which $F_{d-1}$ and $F_d$ are smooth and simply tangent
(i.e. have same linear terms and transverse quadratic terms). 
This is a 'big' open subset of $\P^{N_{d-1}}\times\P^{N_d}$,
i.e. the complement
of $U_d$ has codimension $>1$, and hence $U_d$ is simply connected.
Consider the locus $W_d$ of quadruples $(L, A, F_{d-1}, F_d)$
such that  $(F_{d-1}, F_d)\in U_d$, $L$ is a line, and
$A$ is a divisor of degree $a$ on $L$ contained in $Y=V(F_{d-1}, F_d)$.
This is clearly an irreducible nonsingular variety of dimension $2n-4+N_{d-1}+N_d-a$,
and the projection $\pi_d: W_d\to U_d$ is proper. Moreover by working over a fixed $L$
note that the subset of $W_d$ where $Y$ is singular along $L$ has codimension $>1$.
It follows from the Lemma below that 
$\pi_d$ is also surjective; alternatively, since the tangent spaces of $Y$ can be specified
generally $\mod L$ at $A$ for a general element of $W_d$ (see 
Lemma \ref{secant-balanced-chain-lem}), it follows that $\pi_d$
is generically smooth, hence surjective.

 Our claim is that a general fibre of $\pi_d$ is irreducible, and we argue by contradiction. 
 To begin with,
 note that by irreducibility of $W_d$, the components of such a general fibre are
 monodromy-interchangeable. Also, again by irreducibility of $W_d$, 
  if $Z$ is any codimension-1 subvariety of $U_d$ then
the dimension   of the fibre of $\pi_d$ over
any general point $\ul F \in Z$ is still $n-2-a$, same as the general fibre dimension.
Now if a general fibre of $\pi_d$ is reducible, consider a Stein 
 factorization of $\pi_d$. Then by simple connectivity of $U_d$,  
there would be a codimension-1 locus $Z$ in $U_d$ over which the fibre of $\pi_d$
has a nonreduced component, whose support is algebraically equivalent to a component
of a general fibre over $U_d$, and hence to a fraction of an entire general fibre.
By dimension counting, for  a general point of such a fibre
component,  $Y$ is smooth
at its intersection with $L$. Moreover 
a general line $L$ in the fibre goes through  a general point of $\P^n$
relative to $Y$. Consequently
by \cite{filling},  $L$ is a good (unobstructed) secant to $V(F_{d-1}, F_d)$, i.e. a smooth point of the fibre over $U_d$.
This contradiction proves assertion (i).\par
Assertion (ii) now follows from the fact that for $a\leq d-1$ the family of $a$-secant lines to $Y$ 
fills the ambient projective space, which is a consequence of the following
easy result: 

\end{proof}
\begin{lem}
Let $Y\subset \P^n$ be a codimension-2 complete intersection of type $(c,d)$ and let $a\leq\min(c,d)<n$.
Then there is an $a$-secant line to $Y$ through a general point of $\P^n$.
\end{lem}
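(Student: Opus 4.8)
The plan is to reduce the statement to an elementary fact about common roots of binary forms. Fix a general point $q\in\P^n$; genericity guarantees $q\notin V(F_c)\cup V(F_d)$. Lines through $q$ are parametrized by a $\P^{n-1}$, the projectivized space of directions at $q$, and I would encode a direction by a point $[v]$, writing the corresponding line $L_v$ as the image of the map $[s:t]\mapsto[s\tilde q+t\tilde v]$ for fixed lifts $\tilde q,\tilde v$. Restricting the defining forms to this line produces binary forms $P_v(s,t)=F_c(s\tilde q+t\tilde v)$ and $Q_v(s,t)=F_d(s\tilde q+t\tilde v)$ of degrees $c$ and $d$. Since $q\notin V(F_c)\cup V(F_d)$, their leading ($s^c$- and $s^d$-) coefficients are $F_c(\tilde q),F_d(\tilde q)\neq 0$, so neither form drops degree or vanishes identically. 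The common zeros of $P_v$ and $Q_v$ on $\P^1\cong L_v$ are exactly the points of $L_v\cap Y$, so $L_v$ is $a$-secant precisely when $\deg\gcd(P_v,Q_v)\ge a$.

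Next I would invoke subresultant theory for binary forms of the fixed degrees $c,d$: the condition $\deg\gcd\ge a$ is cut out by the vanishing of the $a$ principal subresultant coefficients $\mathrm{sr}_0,\dots,\mathrm{sr}_{a-1}$. The hypothesis $a\le\min(c,d)$ is exactly what makes this meaningful, since $\min(c,d)$ bounds the number of possible common roots. Each $\mathrm{sr}_j$ is a polynomial in the coefficients of $P_v,Q_v$, and those coefficients are themselves forms in $v$; hence each $\mathrm{sr}_j$ is a homogeneous form on $\P^{n-1}$, and the locus of $a$-secant directions through $q$ is the common zero locus of these $a$ forms.

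Finally I would apply the projective dimension theorem: $a$ hypersurfaces in $\P^{n-1}$ meet in a locus of dimension $\ge(n-1)-a$, which is nonempty because $a\le\min(c,d)<n$ forces $a\le n-1$. Any point $[v_0]$ in this intersection yields a line $L_{v_0}$ through $q$ with $\deg\gcd(P_{v_0},Q_{v_0})\ge a$, i.e. meeting $Y$ in a subscheme of length $\ge a$, which is the desired $a$-secant line. The one point I would take care to state precisely — and which I regard as the only real subtlety — is the subresultant count $\deg\gcd\ge a\Leftrightarrow \mathrm{sr}_0=\dots=\mathrm{sr}_{a-1}=0$, since this is what pins the codimension at exactly $a$ and makes the dimension bookkeeping work; an excess-component or degree-drop issue would be the thing that could break the argument. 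Distinctness of the $a$ intersection points is not needed for the present use (existence of a family of secants filling $\P^n$), but for general $q$ one expects the common roots to be simple, which could be verified separately if a reduced secant were wanted.
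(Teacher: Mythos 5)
Your argument is correct, but it takes a genuinely different route from the paper's. You fix the general point $q$, parametrize the lines through $q$ by the $\P^{n-1}$ of directions, restrict $F_c,F_d$ to each line to get binary forms, and cut out the $a$-secant directions as the common zero locus of the first $a$ principal subresultant coefficients, concluding by the projective dimension theorem in $\P^{n-1}$. The paper instead argues by induction on $a$: the key step is that any one-parameter family of $a$-secants must contain an $(a+1)$-secant, proved by pulling everything back to the (desingularized) ruled surface $K$ swept out by the family --- if no ruling were $(a+1)$-secant, the intersection points with $Y$ would trace a divisor $R\sim aH$ whose removal from the pullback of the section cutting out $Y$ would give a nowhere-vanishing section of $\O(c-a)\oplus\O(d-a)$ twisted appropriately on $K$, which has positive second Chern class, a contradiction. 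The paper's version buys slightly more (the $(a+1)$-secants form a divisor inside the $a$-secants, which feeds the dimension count in part (ii) of the preceding Proposition), while yours is more elementary and effective, directly exhibiting the $a$-secant directions through $q$ as an intersection of $a$ hypersurfaces, of dimension at least $n-1-a$, consistent with that same part (ii). Two points you should make explicit to close your write-up. First, the subresultant equivalence $\deg\gcd(P_v,Q_v)\geq a\Leftrightarrow \mathrm{sr}_0=\dots=\mathrm{sr}_{a-1}=0$ is the standard structure theorem for polynomials with nonvanishing leading coefficients; your normalization does supply this, provided you dehomogenize away from $q$ (view $P_v,Q_v$ as polynomials in the coordinate vanishing at $q$, so the leading coefficients are $F_c(\tilde q),F_d(\tilde q)\neq 0$ and no point of $L_v\cap Y$ escapes to the deleted point, since $q\notin Y$). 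Second, for the dimension theorem you need each $\mathrm{sr}_j$ to define a hypersurface (or vanish identically), never an empty locus; this holds because $\mathrm{sr}_j$ is isobaric of weight $(c-j)(d-j)>0$ for $j<\min(c,d)$, hence homogeneous of positive degree in $v$. With those two remarks added, your proof is a complete and valid alternative.
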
 
\begin{proof} (One of many).
Working by induction on $a$, it suffices to prove that in the indicated
range, the family of $(a+1)$-secants 
is nonempty and has codimension 1 in that of $a$-secants, 
which would follow if the family of $(a+1)$ secants has nonempty intersection
with a general curve section of the family of $a$-secants.
It thus suffices to prove that for $a<\min(c,d)<n$ any 1-parameter
 family of $a$-secants contains an $(a+1)$-secant.
To that end, note the tautological
$\P^1$-bundle over such a 1-parameter family maps 
to a cone in $\P^n$ whose  desingularization we denote by $K$. 
If there is no $(a+1)$-secant in our family,
then the intersection points of the rulings 
with $Y$ trace out a divisor $R$ disjoint from the section
that contracts to the vertex of the cone, hence $R$ is linearly equivalent
to $aH$. Now $Y$ corresponds to a section of $\O(c, d))$ whose
pullback minus $R$ yields a nowhere vanishing section of $\O(c-a, d-a) $ on $K$. But this clearly
cannot exist.
\end{proof}
Our aim now is to extend the Proposition from the case of lines to that of rational
curves and some of their limits. By a \emph{simply covered rational curve}
we mean a curve $C$ admitting a surjective map from a rational tree $\tilde C$, which has fibre degree 
1 over a general point of each component of $C$. Thus, no component of $\tilde C$ may
map with degree $>1$, though some may map to a point, and no two components
of $\tilde C$ may map to the same curve. Clearly, any simply covered rational curve is smoothable in $\P^n$.
\begin{thm}\label{secant-thm}
Let $Y$ be a general $(d, d-1)$ complete intersection in $\P^{n-1} $ and let $a\leq e(d-1)$,
$d\leq n-1$
and $e\leq d-1$. Then
 the locus $\Sec_a^e(Y)$ of  .simply covered 
 rational curves of degree $e$ that are
 $a$-secant   to $Y$ is an irreducible reduced $((e+1)n-4-a)$-dimensional 
and has a smooth normalization, 
and its general member is an irreducible nonsingular
curve that meets $Y$ transversely in exactly $a$ points
and passes through a general point of $\P^{n-1}$.\par
\end{thm}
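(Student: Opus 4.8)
The plan is to mirror the architecture of the case of lines ($e=1$) proved above, replacing the line $L$ by a degree-$e$ rational curve and bootstrapping the geometry from lines to curves by degenerating each curve to a polygon of secant lines and invoking the smoothing machinery of \S1. Concretely, I would form the incidence variety $W_d^e$ of quadruples $(C, A, F_{d-1}, F_d)$ with $(F_{d-1}, F_d)\in U_d$, $C$ a simply covered rational curve of degree $e$, and $A$ a degree-$a$ divisor on the normalization of $C$ with $A\subset Y=V(F_{d-1},F_d)$, and study the projection $\pi_d^e\colon W_d^e\to U_d$ whose general fibre is $\Sec_a^e(Y)$ (decorated with the secant divisor). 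First I would record that $W_d^e$ is irreducible: projecting instead to the pair $(C,A)$, the base is the (irreducible) space of degree-$e$ rational curves times $\Sym^a\P^1$, and the fibre over $(C,A)$ is the set of $(F_{d-1},F_d)\in U_d$ vanishing on $A$, which is open in a product of linear subspaces, hence irreducible; a dimension count then gives $\dim\Sec_a^e(Y)=(e+1)n-4-a$ as claimed.

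The deformation theory is governed by a modified normal bundle. At a curve $C$ meeting $Y$ transversely in $a$ points $p_1,\dots,p_a$, maintaining each secant point imposes, to first order, a single linear condition on a section of $N_{C/\P^{n-1}}$, namely that the induced motion of $p_j$ stay on $V(F_d)$; the relevant space is therefore $H^0(N'_C)$, where $N'_C\subset N_{C/\P^{n-1}}$ is the colength-$a$ elementary down-modification cut out at each $p_j$ by the hyperplane $H_j\subset N_{C/\P^{n-1}}(p_j)$ of normal directions along which $p_j$ remains on $Y$ (exactly as in the non-lci modifications of \S2). Since $\deg N'_C=ne-2-a$ and $\rk N'_C=n-2$, one has $h^0(N'_C)=(e+1)n-4-a$ as soon as $H^1(N'_C)=0$, and $\Sec_a^e(Y)$ is then smooth of this expected dimension at $C$; establishing this vanishing generically is the heart of the matter.

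\textbf{Main obstacle.} The crux is the vanishing $H^1(N'_C)=0$, which I would prove by degeneration to a chain. Since $a\leq e(d-1)$ and $e\le d-1\le n-2$, I can write $a=a_1+\dots+a_e$ with each $a_i\le d-1\le n-2$ and degenerate $C$ to a polygon $L_1\cup\dots\cup L_e$ of secant lines with $L_i$ an $a_i$-secant (available by the lines case). On each link, $N'_{L_i}$ is a general colength-$a_i$ down-modification of $N_{L_i/\P^{n-1}}=\O(1)^{n-2}$, hence $\O(1)^{\,n-2-a_i}\oplus\O^{a_i}$, which is strongly balanced; here the requisite generality of the imposed hyperplanes is supplied by Lemma \ref{secant-balanced-chain-lem}. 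For a general such configuration the glued bundle $N'_C$ is then in balanced general position, so Lemma \ref{balanced-lem} shows $N'_C$ is balanced and Corollary \ref{smoothing} shows a general smoothing is balanced; since its degree $ne-2-a\ge 2e-2\ge0$ throughout the stated range, the smoothing is semipositive and $H^1(N'_C)=0$. This simultaneously yields nonemptiness, the expected dimension, smoothability of the chain to an irreducible curve, and reducedness of the general fibre of $\pi_d^e$.

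Finally, irreducibility follows exactly as in the lines case: $W_d^e$ is irreducible and $U_d$ is simply connected, so if the general fibre of $\pi_d^e$ were reducible a Stein factorization would force a nonreduced fibre component over a codimension-one locus $Z\subset U_d$, contradicting the generic reducedness just established via the $H^1$-vanishing. The remaining assertions are then routine: the smooth normalization comes from the smoothness of the underlying map-with-divisor parameter space wherever $H^1(N'_C)=0$; a general smoothing is an embedded nonsingular rational curve by the positivity of $N'_C$ together with the argument of Lemma \ref{free-rational-lem}; transversality of the $a$ intersection points with $Y$ holds by construction and generality; and the family sweeps out $\P^{n-1}$ — so the general member passes through a general point — because already the secant line components do, by part (ii) of the lines case.
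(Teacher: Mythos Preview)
Your architecture matches the paper's: form the incidence variety $W_d^e$, prove $H^1$-vanishing for the secant bundle by degeneration to a chain of secant lines (via Lemmas \ref{secant-balanced-chain-lem} and \ref{balanced-lem}), then run a Stein-factorization argument over the simply connected base $U_d$. The first two steps are essentially what the paper does.

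The gap is in the last step. Your sentence ``a Stein factorization would force a nonreduced fibre component over a codimension-one locus $Z\subset U_d$, contradicting the generic reducedness just established via the $H^1$-vanishing'' is not a contradiction: smoothness of the \emph{general} fibre (from $H^1=0$ at a general point of $W_d^e$) is perfectly compatible with a nonreduced component in a \emph{special} fibre over $Z$. What you must show is that a general point of the putative multiple component $\hat Z$ is itself an unobstructed secant, and for that you have to locate $\hat Z$. The paper compactifies by taking the closure $\bar W_d^e$ in the relative Hilbert scheme over the Kontsevich space $\mathcal M$, observes that $\hat Z$ cannot be contractible (a multiple of it moves), so its image in $\mathcal M$ has codimension $\le 1$, and then runs a short case analysis --- (i) $C$ smooth embedded, (ii) one node with $n-1=3$, (iii) $C$ with two smooth embedded components --- showing in each case that the corresponding secant is still unobstructed (since it passes through a general point of $\P^{n-1}$, or since $Y$ is still general relative to such $C$), contradicting the multiplicity. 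Your appeal to ``exactly as in the lines case'' misreads that case too: there the paper does \emph{not} stop at generic reducedness but argues that a general line in the multiple component still passes through a general point and is hence unobstructed. A related technical loose end is properness: Stein factorization requires a proper map, and your $W_d^e$ (simply covered curves with a marked divisor) is not obviously proper over $U_d$; this is why the paper passes to the closure in a compact parameter space before running the argument.
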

\begin{proof}
To begin with, we use the same construction as
in the case of lines above  and let $W^e_d$ denote the set of triples $(C, A,\ul F)$ such that 
$\ul F\in U_d$, $C$ is a smooth rational curve of degree $e$  and $A$ is a subscheme 
of length $a$ of the schematic intersection $C\cap Y$ (notations as above ).
Note that a general $C$ will be a rational normal curve in its linear span which is a $\P^e$. 
Let $\bar W_d^e$ denote the closure of $W^e_d$ in the product $ \H\times U_d$,
where $\H$ is the relative Hilbert scheme
(of zero-dimensional subschemes of length $a$) 
of the universal (stable) curve over  
$\cM$,  the Konstevich space of stable genus-0 unpointed maps of degree $e$  to $\P^{n-1}$.
Points of $\bar W^e_d$ have the form $(f, C, A, F_{d-1}, F_d)$ such that
$f: C\to\P^{n-1}$ is a stable map and $A$ is a length-$a$ subsceme of $f^*(F_{d-1}, F_d)$,
and the whole tuple is a limit of similar ones where $f$ is an embedding of a smooth rational 
(normal) curve.
By a result of Gruson et al. \cite{glp}, $W_d^e$ 
is irreducible of dimension $(e+1)n-4+N_{d-1}+N_d-a$.
For the general point $(C, A, \ul F)\in W_d^e$, we can choose $F_{d-1}, F_d$ to have general
enough tangent spaces at $A$ (this is already true when $C$ is a chain of lines), 
by Lemma
 \ref{secant-balanced-chain-lem}. Consequently,
using Lemma
 \ref{secant-balanced-lem} below, it follows that
 the secant bundle $N^s_C$ has vanishing $H^1$.
This shows that the projection to $U_d$ is 
a smooth morphism at that point (hence it is dominant and has a smooth general
fibre of the expected dimension).
 \par
Note that  $\H$ is smooth \cite{hilb} and there is an
open subset $\H'\subset\H$ whose complement has codimension 3,
such that for $(f, C, A)\in\H'$,
$A$ has length 1 or 0 
at any singular point of  $C$ (if $C$ has any) and at any point
lying in a non-singleton fibre of $f$. Over  $\H'$ the projection from $\bar W$
is an affine bundle. 
 Note that $\bar W^e_d$ contains points where $C$ is an embedded curve of the
 form $L\cup C_{e-1}$ where $L$ is a line and $C_{e-1}$ has degree $e-1$,
 both going through a general point $q\in\P^{n-1}$ with respect to $Y$,
 and $A$ consists of $a_1\leq d-1$ points on $L\setminus q$ 
 plus $a_2\leq(e-1)(d-1)$ points on $C_{e-1}\setminus q$.
  Indeed such secant curves  $L, C_{e-1}$ exist
 by induction and by construction the secant sheaves $N^s_{C_{e-1}}, N_L^s$
 are generically globally generated, hence semipositive, where $N^s_*$ denotes the secant
 sheaf, which may be identified with the normal bundle to the proper transform
 in the blowup of $\P^{n-1}$ in $Y$. 
 Now $N^s_L(q)$ and $N^s_{C_{e-1}}(q)$ admit a common quotient $Q$ such that the
 kernel $K$ of the natural map $N^s_L\oplus N^s_C\to Q$ corresponds to connected, locally
 trivial deformations of $L\cup C_{e-1}$ (compare the proof of Lemma \ref{free-rational-lem}). 
 Since $K$ contains $N^s_L(-q)\oplus N_{C_{e-1}}^s(-q)$,
 it has $H^1=0$.
 Therefore $L\cup C_{e-1}$
 is unobstructed and smoothable as secant. It follows that for a general point in $W^e_d$, 
 $C$ also goes through a general point of the ambient space.
 This also gives another, inductive proof that $H^1(N_C^s)=0$ for $C$ general.
 \par 
 Let $\tilde W$ denote the normalization of $\bar W^e_d$. If a general fibre of $W^e_d$
 over $U_d$ is reducible, then so is a general fibre of $\bar W$ over $U_d$.
 But as $\bar W$ is normal, so is its general fibre hence that fibre is locally irreducible,
 and therefore it must be disconnected. Using as above a Stein factorization
 of $\bar W\to U_d$, it follows again that there is a codimension-1 locus $Z\subset U_d$
whose inverse image in $\bar W$ has a multiple component $\hat Z$, i.e. 
 such that the general fibre of $\bar W$, hence of $W^e_d$ has a multiple component $B$
 whose support is a limit of a general fibre component. Note that because a multiple of $B$ moves,
 i.e. is in a 1-parameter family filling up $\bar W$, 
 $B$ cannot be contractible, i.e. cannot map to a subvariety of codimension $>1$ in $\cM$,
 because that is a topological property.
  Hence $\hat Z$ must map to a locus of codimension 1 or less in $\cM$. 
 \par
 Now if $(C, f)$ is a general element of a codimension
  $\leq 1$ locus in $\M$ then
either\par
 (i) $C$ is smooth and $f$ is an embedding, or\par
  (ii) $C$ is smooth, $n-1=3$ and $f$
is an embedding except of one transverse double point, or\par (iii) $C$ has exactly 2 smooth
components and $f$ is an embedding.\par Now case (ii) is excluded by the assumption
$d\leq n-1, e\leq d-1$, so $C$ would map to a line or conic so we are in fact in case (i) or (iii). 
In case (i), $C$ being irreducible and containing a general point of $\P^{n-1}$, it is
an unobstructed secant to the corresponding $Y$ so the fibre over $U_d$ cannot
be multiple. In case (iii), the locus of curves $(C, f)$ already has codimension 1 so the
$Y$ is general for such $C$; however it is easy to see that for such $(C, f)$ and $Y$ general,
$C$ is an unobstructed secant. This contradiction competes the proof.
 \par
 \par

\end{proof} 
\begin{lem}\label{secant-balanced-chain-lem}
Let $C$ be a general chain of $e$ lines in $\P^n, e\leq n$.
Let $Y$ be a general $(d-1, d)$ complete
intersection, $d\leq n$, meeting $C$ in $a\leq e(d-1)$ points with at most $d-1$ on each line. Then the
tangent spaces to $Y$ at $Y\cap C$ can be specified generally modulo
the tangents to $C$.
\end{lem}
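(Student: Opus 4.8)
The plan is to reinterpret the assertion as a dominance (generic surjectivity) statement for a natural ``tangent-data'' map and then to reduce it, exactly as in Lemma \ref{gen-position-lem}, to an $H^1$-vanishing on the chain $C$. First I would make the data precise. Write $Y=V(F_{d-1},F_d)$ and fix $p\in A$ on a line $L_i\subset C$, with $\ell=T_pC=T_pL_i$. Since $Y$ is a transverse complete intersection at $p$, the conormal space $N^*_{Y,p}=\langle dF_{d-1}(p),dF_d(p)\rangle\subset T^*_p\P^n$ is two-dimensional, and the image $(T_pY+\ell)/\ell$ of $T_pY$ in $N_{C/\P^n}(p)=T_p\P^n/\ell$ is a hyperplane, cut out by the unique (up to scalar) covector $\eta_p\in N^*_{Y,p}\cap\ell^\perp$. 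Thus ``the tangent space to $Y$ at $p$ modulo $T_pC$'' is exactly the line $\langle\eta_p\rangle\subset N^*_{C/\P^n}(p)\cong\C^{n-1}$, a point of $\P^{n-2}$; trivializing the (trivial) projectivized normal bundle along each $L_i$ as in the text identifies all these with hyperplanes in a fixed $K_i$. The lemma then says that the map sending a pair $(F_{d-1},F_d)$ with $A\subset V(F_{d-1},F_d)$ to the tuple $(\langle\eta_p\rangle)_{p\in A}\in\prod_{p}\P(N^*_{C/\P^n}(p))$ is dominant.

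To prove dominance I would fix $F_{d-1}$ general with $A\subset V(F_{d-1})$ (so that the normal covector $\overline{dF_{d-1}}(p)$ is a fixed general element of $N^*_{C/\P^n}(p)$ for each $p$) and vary $F_d\in H^0(\P^n,\mathcal I_A(d))$. Since $\eta_p$ is the combination of $\overline{dF_{d-1}}(p)$ and $\overline{dF_d}(p)$ annihilating $\ell$, it suffices to show that the normal first jets $\overline{dF_d}(p)\in N^*_{C/\P^n}(p)$ can be prescribed independently and generally at the points of $A$; feeding such general values into the combination with the fixed $\overline{dF_{d-1}}(p)$ then makes each $\eta_p$ sweep out all of $\P(N^*_{C/\P^n}(p))$. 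Concretely this is the surjectivity of the evaluation map
\[
H^0(\P^n,\mathcal I_A(d))\ \longrightarrow\ \bigoplus_{p\in A}\big(N^*_{C/\P^n}\otimes\O_C(d)\big)(p),
\qquad F_d\mapsto(\overline{dF_d}(p))_{p\in A}.
\]
Setting $\mathcal N=N^*_{C/\P^n}\otimes\O_C(d)$, this map factors through restriction to $C$ and evaluation at $A$, so by the sequence $0\to\mathcal N(-A)\to\mathcal N\to\mathcal N|_A\to0$ it is surjective once $H^1(C,\mathcal N(-A))=0$ and the restriction $H^0(\P^n,\mathcal I_A(d))\to H^0(C,\mathcal N)$ is surjective.

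The $H^1$-vanishing is where the hypothesis $a_i\le d-1$ on each line enters. On $L_i$ we have $N^*_{L_i/\P^n}=(n-1)\O_{L_i}(-1)$, hence $\mathcal N|_{L_i}=(n-1)\O_{L_i}(d-1)$ and $\mathcal N(-A)|_{L_i}=(n-1)\O_{L_i}(d-1-a_i)$ with $d-1-a_i\ge0$. Thus $\mathcal N(-A)$ is a direct sum of line bundles of nonnegative degree on every component, so its total degree on any subchain is $\ge-1$ and $H^1(C,\mathcal N(-A))=0$ by the subchain principle used in Corollary \ref{smoothing}; equivalently one runs the same induction on $e$ (and $d$) as in Lemma \ref{gen-position-lem}, peeling off a terminal line $L_e$ via the restriction sequence and using that the twist on $L_e$ has nonnegative degree. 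The surjectivity $H^0(\P^n,\mathcal I_A(d))\to H^0(C,\mathcal N)$ holds for $d$ in the stated range by the analogous vanishing applied to the ambient conormal jets. This yields dominance, and hence a general $Y$ realizing general tangent data at the points of $A$.

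I expect the main obstacle to be the passage from controlling the single form $F_d$ to the genuinely combined invariant $\eta_p$: one must check that with $F_{d-1}$ fixed generally, varying the normal jet of $F_d$ actually moves $\eta_p$ onto a general point, i.e.\ that the two normal covectors $\overline{dF_{d-1}}(p),\overline{dF_d}(p)$ together with the tangential derivatives remain in general position. A secondary point is that the $A$-points must be taken in the interiors of the lines, away from the nodes $p_{i,i+1}$, so that the degree bookkeeping $\mathcal N(-A)|_{L_i}=(n-1)\O_{L_i}(d-1-a_i)$ is valid. Both are open conditions and are therefore compatible with keeping $(F_{d-1},F_d)$ a smooth complete intersection.
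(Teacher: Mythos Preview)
Your route is quite different from the paper's, which uses no cohomology at all: it reduces to the extremal case $e=n$, $a_i=d-1$ on every line, then \emph{specializes the chain to a pencil} of $n$ general lines through a single point. In that configuration, for each $i$ one writes down an explicit reducible degree-$d$ hypersurface $G_i=\big(\bigcup_{j=1}^{d-1}H_{i,j}\big)\cup H_i$, where the $H_{i,j}$ are general hyperplanes through the points of $Z\cap L_i$ and $H_i$ is a hyperplane containing all $L_j$ with $j\neq i$; each $G_i$ contains $Z$ with general tangents along $Z\cap L_i$, and a general linear combination $\sum t_iG_i$ has general tangents at all of $Z$.

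Your cohomological argument, as written, has two genuine gaps. First, the claimed factorization $H^0(\P^n,\mathcal I_A(d))\to H^0(C,\mathcal N)$ is not well-defined: a section $F_d$ vanishing only on $A$ (not on all of $C$) has no associated global section of $N^*_{C/\P^n}(d)$, and the ``normal part'' $\overline{dF_d}(p)$ is not intrinsic, since $N^*_{C,p}=\ell^\perp$ is a \emph{subspace} of $T^*_p\P^n$, not a quotient. One can try to repair this by restricting to $F_d\in H^0(\mathcal I_C(d))$, where the conormal map is honestly defined, but then surjectivity onto $H^0(C,\mathcal N)$ is a separate claim you have not established. Second, and more seriously, the degree computation is wrong: for a nodal chain, $N^*_{C/\P^n}|_{L_i}$ is \emph{not} $N^*_{L_i/\P^n}=(n-1)\O_{L_i}(-1)$ but a colength-$\nu_i$ subsheaf of it (one down-modification at each node, in the direction of the adjacent branch). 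Thus for an interior line with $a_i=d-1$, $\mathcal N(-A)|_{L_i}$ acquires summands of degree $-1$, not $0$, and your ``all summands nonnegative on every component'' argument for $H^1=0$ collapses; one would then have to analyze the global gluing of $\mathcal N(-A)$ across the chain, precisely the difficulty the paper's explicit construction sidesteps.
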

\begin{proof}
There is evidently no loss of generality in assuming $C$ is a chain of $n$ lines in $\P^n$
and the intersection $Z=Y\cap C$ comprises $d-1$ points on each line.
We then specialize the chain to a pencil $C=\bigcup L_i$ of $n$ general lines through a point. 
For each $i$, let 
\[G_i=\bigcup \limits_{j=1}^{d-1}H_{i,j}\cup H_i\] 
where $H_{i,j}$ is a general
hyperplane through the $j$th point of $Z$ on $L_i$ while $H_i$ is a hyperplane containing
all the lines $L_j, j\neq i$. Then $G_i$ contains $Z$ and has general tangents on $Z\cap L_i$.
Then a general linear combination $\sum t_iG_i$ also contains $Z$ and has general tangents
at all of $Z$.

\end{proof}
Using Lemma \ref{balanced-lem} and, if appropriate, smoothing the chain, we can conclude:
\begin{lem}\label{secant-balanced-lem}
Let $C$ be a general rational curve of degree  $e$ (resp. general chain of $e$ lines)
  in $\P^n, e\leq n$. Let $Y$ be a general $(d-1, d)$ complete
intersection meeting $C$ in $a\leq e(d-1)$ points (resp. with at most $d-1$ on each line).
 Then the associated secant bundle to $C$
relative to $Y$,  viz. $N^s_{C/\P^n}$, is
balanced. 
\end{lem}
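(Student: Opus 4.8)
The plan is to prove the statement first for a chain of lines, using the structure theory of Section 1, and then to pass to a general smooth rational curve of degree $e$ by smoothing. Recall that the secant sheaf $N^s_{C/\P^n}$ is the normal bundle of the proper transform of $C$ in the blowup of $\P^n$ along $Y$. Since $Y$ is a codimension-two complete intersection, a curve $C$ meets it only in the isolated points of $C\cap Y$, and at each such point $p$ (where, generically, $C$ is not tangent to $Y$) the image $T_pY \bmod T_pC$ is a hyperplane $T_p$ in the normal space $N_{C/\P^n}(p)\cong\C^{n-1}$. As in the polygon construction of Section 2, $N^s_{C/\P^n}$ is thus obtained from $N_{C/\P^n}$ by an elementary down-modification of colength one, along $T_p$, at each of the $a$ points of $C\cap Y$.

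First I would treat the chain $C=L_1\cup\dots\cup L_e$. On each line $N_{L_i/\P^n}\simeq\O_{L_i}(1)^{n-1}$, and I want to see that after the $k_i\le d-1$ down-modifications coming from $L_i\cap Y$ (together with the elementary modifications at the nodes, which for general data preserve strong balancedness) the restriction $N^s_{C/\P^n}|_{L_i}$ is strongly balanced. The essential point is that a degree-one (constant-direction) line subbundle $\C\cdot v$ of $\O_{L_i}(1)^{n-1}$ survives a down-modification along $T_p$ as a degree-one subbundle precisely when $v\in T_p$; hence, for general hyperplanes, the upper subbundle of the modified bundle is governed by the intersection $\bigcap_{p\in L_i\cap Y}T_p$, which, since $k_i\le d-1\le n-1$, has the expected dimension. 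This exhibits each $N^s_{C/\P^n}|_{L_i}$ as strongly balanced and pins down its upper subspace at the nodes.

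Next I would verify that $N^s_C$ is in balanced general position, which is exactly what is needed to invoke Lemma \ref{balanced-lem}. By Lemma \ref{secant-balanced-chain-lem} the tangent spaces to $Y$ along $C\cap Y$ may be prescribed generally modulo the tangents to $C$; that is, the modifying hyperplanes $T_p$ are general and independent from line to line. Because the upper subspaces $V_+(N^s_{L_i})$ are controlled by the intersections $\bigcap_p T_p$, their iterated intersections at the nodes $p_i=L_i\cap L_{i+1}$, which assemble into the forward and backward flags of Section 1, then have the generic dimensions and are in mutual general position. Lemma \ref{balanced-lem} now yields that $N^s_C$ is balanced on the chain. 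Finally, for a general smooth rational curve of degree $e$ I would smooth the chain: the pair $(N^s_C,C)$ satisfies the hypotheses of Lemma \ref{balanced-lem}, hence of Corollary \ref{smoothing}, and a general smoothing carries $N^s_C$ to the secant bundle of the smoothed curve; by Corollary \ref{smoothing} the result is balanced.

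The hard part is the general-position verification of the third step. The identification of the upper directions with the intersection $\bigcap_p T_p$ of the $Y$-tangent hyperplanes is what makes it tractable, reducing balanced general position to general position among intersections of general hyperplanes, which Lemma \ref{secant-balanced-chain-lem} supplies. The points demanding care are the bookkeeping of the elementary modifications at the nodes (where the normal spaces $N_{L_i}(p_i)$ and $N_{L_{i+1}}(p_i)$, both quotients of $T_{p_i}\P^n$, must be glued correctly) and the verification that the genericity furnished by Lemma \ref{secant-balanced-chain-lem} places all the iterated intersections along the chain in general position simultaneously.
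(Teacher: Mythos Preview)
Your proposal is correct and follows essentially the same approach as the paper, which proves the lemma in one line: ``Using Lemma \ref{balanced-lem} and, if appropriate, smoothing the chain, we can conclude.'' You have simply unpacked this---invoking Lemma \ref{secant-balanced-chain-lem} to get genericity of the modifying hyperplanes, then Lemma \ref{balanced-lem} for the chain, then Corollary \ref{smoothing} for the smooth curve---which is exactly the intended argument.
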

Note that by taking $a=e(d-1)$ and putting the curve or chain on a component $X_1$ of a 2-fan
hypersurface $X_1\cup X_2$ of type $(d, d-1)$ (so it is disjoint from $X_2$), and smoothing the fan
hypersurface, we obtain rational curves of degree $e\leq n$ on a general hypersurface of degree
$d\leq n$ in $\P^n$. However, we can do somewhat  better:
\begin{thm}\label{balanced-high}
For $d\leq n$ and  $e\leq 2n-2$, there exists a rational curve of degree $e$ on a general hypersurface $X$
of degree $d$ in $\P^n$ having balanced normal bundle.
\end{thm}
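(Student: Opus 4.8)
The plan is to prove Theorem \ref{balanced-high} by degenerating the general hypersurface $X$ to a 2-fan hypersurface $X_1\cup X_2$ of type $(d,d-1)$, placing a carefully chosen reducible rational curve on the fan, verifying that its normal bundle is balanced, and then smoothing the pair (curve, hypersurface) back to a general hypersurface. The range $e\le 2n-2$ (as opposed to the range $e\le n$ obtainable by putting the whole curve on a single component, as remarked just before the theorem) forces the curve to genuinely straddle both components of the fan, so I would write $C=C_1\cup C_2$ with $C_i$ on $X_i$, meeting at a point (or points) on the double locus $E=X_1\cap X_2$, with $\deg C_1=e_1$, $\deg C_2=e_2$, $e_1+e_2=e$, and I would choose the split so that each $e_i$ lands in the already-settled range $e_i\le n$ (roughly $e_i\approx n-1$), which is exactly what the ceiling $2n-2$ permits.

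The key steps, in order, are as follows. First I would set up the 2-fan and recall from \S4 that $\O(d,d-1)$ restricts to the two components giving on $X_1$ (the blowup of the quasi-cone) and on $X_2$ (a general hypersurface of degree $d-1$) the appropriate curve classes; by Proposition \ref{no-E-prop} a limiting stable map does not run into the singular locus, so the decomposition $C=C_1\cup C_2$ is the honest picture. Second, on the quasi-cone component $X_1$ I would use the identification of curves on $X_1$ with multisecant curves in $\P^{n-1}$ relative to the $(d,d-1)$ complete intersection $Y$: by Theorem \ref{secant-thm} and the balancedness of the secant bundle in Lemma \ref{secant-balanced-lem}, the normal bundle of $C_1$ in $X_1$ is balanced and strongly balanced enough to control its upper subspace at the attaching point on $E$. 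Third, on $X_2$ I would invoke Proposition \ref{balanced-prop} (for $e_2\le d\le n$) to get a rational curve of degree $e_2$ with balanced normal bundle, again controlling its upper subspace at the node. Fourth, I would compute $N_{C/X_0}$ along the reducible curve: away from the node each piece is the (balanced) normal bundle just described, and at the node the global normal bundle is governed by the gluing data together with the first-order smoothing of the ambient double point, exactly as in the local $tz=xy$ analysis of \S2. The technical heart is then to apply the gluing lemmas of \S1 — Lemma \ref{balanced-lem} and, for the comb-type attachment of extra rational tails picked up in the degeneration, Lemma \ref{comb-lem} — to conclude that the glued bundle $N_{C/X_0}$ is balanced, provided the upper subspaces of the two pieces meet in general position at the node. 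Finally, Corollary \ref{smoothing} (or its normal-bundle analogue: a balanced normal bundle has $H^1=0$, hence is unobstructed) lets me smooth $C$ together with the fan back to a rational curve of degree $e$ with balanced normal bundle on the general smooth hypersurface $X_t$.

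The main obstacle I expect is the general-position hypothesis at the node needed to run the gluing lemmas of \S1. Balancedness of $N_{C_1/X_1}$ and $N_{C_2/X_2}$ separately is not enough: to apply Lemma \ref{balanced-lem} (or Lemma \ref{comb-lem}) one must know that the upper subspaces $V_+(N_{C_1/X_1})$ and $V_+(N_{C_2/X_2})$ at the attaching point are in mutual general position inside the fibre, and moreover that the elementary modification of the normal bundle induced by the ambient node (the down-modification coming from the $tz=xy$ geometry, as in the local model of \S2) respects this general position. Securing this requires genuine freedom in choosing the attaching point on $E$ and in choosing the complete intersection $Y$ (equivalently, the tangent data of $X_1$ along $C_1$), which is precisely what Lemma \ref{secant-balanced-chain-lem} and the transversality in Lemma \ref{gen-position-lem} are designed to provide. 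A secondary subtlety is bookkeeping the exact upper rank and degree so that the two balanced pieces glue to a \emph{balanced}, not merely split, bundle with the numerical invariants $a_+,r_+$ matching the expected normal bundle of a degree-$e$ curve; this is a finite case-check of the flag dimensions, of the same flavor as the case analysis in the proof of Lemma \ref{balanced-lem}, and I would handle the boundary cases (when one $e_i$ is forced small, or when the split is lopsided near $e=2n-2$) by allowing a short chain rather than a single $C_i$ on one component and appealing to the chain version of Lemma \ref{secant-balanced-lem}.
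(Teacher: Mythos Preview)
Your overall strategy (degenerate to a $2$-fan of type $(d,d-1)$, place a reducible curve, verify balancedness via the lemmas of \S1, then smooth) matches the paper, but the actual construction you propose has a genuine gap. You write $C=C_1\cup C_2$ with the two pieces meeting at a single point of $E=X_1\cap X_2$. This cannot work as stated: since $E$ is a hyperplane section of $X_2$, the curve $C_2$ meets $E$ in $e_2$ points, and for the curve on the fan to lie in the degree-$e$ class (equivalently, to be smoothable to an irreducible curve on the general fibre) one needs $C_1.E=C_2.E=e_2$. If you force $C_1$ to hit all $e_2$ of those points, $C_1\cup C_2$ acquires $e_2-1$ loops and is no longer rational; if you attach at only one point, the intersection numbers with $E$ do not match. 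The paper's key extra ingredient is to insert \emph{rulings} $R_i$ of the exceptional divisor $\tilde Y\subset X_1$ at the remaining points of $C_2\cap E$: each ruling is a $\P^1$ meeting $E$ once, with normal bundle of the shape $(n-3)\O\oplus\O(-1)$ (upper rank $r-1$), so the resulting tree $C_1\cup\bigcup R_i\cup C_2$ is a generalized comb to which Lemma~\ref{comb-lem} applies directly. Your proposal mentions Lemma~\ref{comb-lem} in passing but does not identify what the teeth are; the rulings are precisely those teeth, and without them the construction does not get off the ground.

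There is a second, independent gap in the range. You invoke Proposition~\ref{balanced-prop} for $C_2\subset X_2$, which requires $e_2\le d-1$ (the degree of $X_2$); together with $e_1\le n$ this yields only $e\le n+d-1$, far short of $2n-2$ when $d$ is small. The paper avoids this by running an \emph{induction on $d$} with base case $d=1$ (where balanced rational curves of every degree exist trivially), and at the inductive step takes $C_2$ of the fixed degree $e_2=n-2$ on $X_2$ with balanced normal bundle supplied by the inductive hypothesis. The specific choice $e_2=n-2$ is not arbitrary: it makes $r_+(N_{C_2/X_2})=n-4$, so the number of ruling-teeth attached to $C_2$ is at most $r_+$, exactly the numerical hypothesis of Lemma~\ref{comb-lem}. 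Your ``roughly $e_i\approx n-1$'' and the hedge about ``short chains'' at the end do not substitute for either of these two missing ideas.
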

\begin{proof}
Note that the case $e\leq n$ is handled 
 by Lemma \ref{secant-balanced-lem}
 (alternatively, by the case $C_2=\emptyset, a=e_1(d-1)$ of the
 constructions below); in some cases 
 Proposition \ref{balanced-prop} applies also, as well as
 the results of \cite{coskun-riedl}.  
 So we may assume $e>n$. 
 We use an  induction on $d$ based on the following 2 
constructions of curves
on a general hypersurface $X_1\cup X_2$ of bidegree $(d, d-1)$ on a 2-fan.\par
\ul{Construction 1:} $C_2\subset X_2$ is a general rational curve of degree $e_2= n-2$.
$R_1,...,R_{e_2}\subset \tilde Y\subset X_1$ are rulings connected to $C_2$,
i.e. $(\bigcup R_i)\cap E=C_2\cap E$. $C_1\subset X_1$
is a general curve of degree $e_1\leq n$  meeting $R_1$ and no other $R_i$, and meeting $\tilde Y$ in
$a-1=e_1(d-1)-1$ additional points (hence $C_1$ is disjoint from $E$). 
We let $C=C_1\cup\bigcup\limits_{i=1}^{e_2}R_i\cup C_2$. Note that this has dgree $e_1+e_2$
with respect to the line bundle $\O(1,0)$.
\par
\ul{Construction 2:} $C_2, R_2,...,R_{e_2}$ as above, $C_1$ meeting $\tilde Y$ in $e_1(d-1)$ points not
on any $R_i$, and meeting $C_2\cap E$ in 1 point.
We let $C=C_1\cup\bigcup\limits_{i=2}^{e_2}R_i\cup C_2$. It has $\O(1,0)$-degree equal to $e_1+e_2-1$.
\par
Regarding Construction 1, given $C_2\subset X_2$, a general degree-$e_2$ rational curve on a general
degree-$(d-1)$ hypersurface, we choose $f_d$ generally through $C_2\cap E$, which then determines 
the quasi-cone $Q(f_d, d_{d-1})$. Note that the normal bundle to the rulings is of the form
$(n-2)\O\oplus\O(-1)$ where the first summand is the normal bundle in $\tilde Y$. It follows easily
that
\[N_{C/X}|_{C_2}=N_{C_2/X_2}, N_{C/X}|_{R_i}=(n-2)\O\oplus \O(-1), i\geq 2,
 N_{C/X}|_{R_1}=(n-1)\O.\]
 Now the degree of $N_{C_2/X_2}$ is $e_2(n+2-d)-2=(n-2)(n+2-d)-2$
 	and its rank is $n-2$. Therefore is has upper rank $r_+=n-4$
 	(or $n-2=2$ if $n=4$).
Moreover $N_{C/X}|_{C_1}$ is an elementary 'up' modification 
(length-1 enlargement) of the secant
sheaf $N^s_{C_1}$ in the point $C_1\cap R_1$ (and thus locally equal to $N_{\bar C_1/\P^{n-1}}$
there where $\bar C_1$ denotes the image of $C_1$ in $\P^{n-1}$.  By general choice of $f_d$, the
tangent space involved can be chosen generally and consequently by 
 Lemma \ref{comb-lem}
 and induction, $N_{C/X}$
is balanced, hence likewise for the  general smoothing $(C', X')$ of $(C, X)$ where $C'$ has
degree $e_1+e_2$. The case of Construction 2
is similar. \par
Now for $d=1$, it is well known that there exist rational curves of any degree with balanced normal bundle.
Then using either of the above constructions plus induction and smoothing,
we construct rational curves of all degrees $e\leq 2n$  on $X$ of any degree $d\leq n$ with balanced normal bundles.
\end{proof}
\begin{rem}
The foregoing constructions require
the existence of a hypersurface $f_d$ in $\P^{n-1}$ with given tangent hyperplanes at $e_2$ points.
Counting dimensions, the latter 'should' be
 possible when $(n-1)e_2\leq \binom{n+d-1}{n-1}$. Thus,
this construction could in principle produce 
curves with balanced normal bundle and degree $e$
up to, roughly, $\frac{1}{n-1}\binom{n+d-1}{n-1}$ on a 
general hypersurface of degree $d$ in $\P^n$. However, I have no reason
to believe this would be sharp or for that matter that there is any
upper bound on the degrees of rational curves with balanced normal bundle
on a general Fano hypersurface.
\end{rem}
\section{Irreducible families}
Here we use out techniques to
re- prove a result on irreducibility of families of rational curves
 degree $e<d$ on hypersurfaces
of degree $d<n$ in $\P^n$. The idea is to specialize to a 2-fan $X_1\cup X_2$ and
 try to get the
curve to specialize to $X_1$ by imposing the maximum number of points to go into $X_1$.
This 'maximum number' is known thanks to balancedness
of the normal bundle (cf. Cor. \ref{curve-thru-point-cor}). 
Then the part of the curve in $X_1$ is a suitable multisecant curve amenable to the
results of the last section. 
\begin{thm}\label{main}
The family of rational curves of degree $e<d$ on a general hypersurface  of degree $d$
in $\P^n$ for $d<n, n\geq 4$ is irreducible,
 generically unobstructed and of dimension
$e(n+1-d)+n-4$.
\end{thm}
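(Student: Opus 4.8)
The plan is to prove irreducibility by a dimension-and-degeneration argument, specializing the ambient $\P^n$ together with its general hypersurface $X$ of degree $d$ to a 2-fan hypersurface $X_1\cup X_2$ of type $(d, d-1)$, exactly as set up in \S4--\S6. First I would establish that the expected dimension $e(n+1-d)+n-4$ is correct and that a general member is unobstructed: since $e<d\leq n-1$, Lemma \ref{secant-balanced-lem} (applied with $a=e(d-1)$) already produces a rational curve of degree $e$ on $X$ with \emph{balanced}, hence semipositive, normal bundle, so $H^1(N_{C/X})=0$ and the Kontsevich space $\cM_0(\cX, e)$ is smooth of the expected dimension at such a point. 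This handles genericity of the dimension count and reduces irreducibility to showing that the whole family degenerates, up to monodromy, onto a single irreducible locus on the fan.

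The main step is the degeneration analysis. Let $\cM_*$ be any component of $\cM_0(\cX, e)$ dominating the base $B$, and let $C_0\subset X_1\cup X_2$ be the limit of a general member. By Proposition \ref{no-E-prop}, no component of $C_0$ maps with positive degree into the double locus $E=X_1\cap X_2$, so $C_0$ breaks as $C_1\cup C_2$ with $C_j\subset X_j$ and $C_1\cap C_2\subset E$. Writing $e_1=\deg C_1$, $e_2=\deg C_2$ with $e_1+e_2=e$, the curve $C_1$ on $X_1$ (the blowup of $\P^{n-1}$ in the $(d, d-1)$ complete intersection $Y$) corresponds, via the identifications of \S3, to a rational curve of degree $e_1$ in $\P^{n-1}$ that is $a$-secant to $Y$ with $a=e_1(d-1)-e_2$, while $C_2\subset X_2$ is a degree-$e_2$ rational curve on a general hypersurface of degree $d-1$. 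The key numerical input is that because the normal bundle is balanced, the maximum number of general points one can force into $X_1$ is controlled by Corollary \ref{curve-thru-point-cor}; pushing this to the extreme, I would argue that for $e<d$ the entire curve is forced into $X_1$, i.e. $e_2=0$ and $C_0=C_1$ with $a=e_1(d-1)=e(d-1)$. Concretely, any nonempty $C_2$ would require $C_2$ to meet $E$, and a dimension count against the $(e_2+1)$-fold freedom of $C_2\subset X_2$ versus the secant conditions on $C_1$ shows the locus with $e_2>0$ has strictly smaller dimension than the full expected dimension, so it cannot support a dominating component.

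With $C_2$ empty, the limit locus is exactly the space of degree-$e$ rational curves in $\P^{n-1}$ that are $e(d-1)$-secant to the general $(d, d-1)$ complete intersection $Y$, which is precisely $\Sec_a^e(Y)$ with $a=e(d-1)$. Theorem \ref{secant-thm} then applies (the hypotheses $d\leq n-1$, $e\leq d-1$ are met since $e<d<n$), giving that this locus is \emph{irreducible}, reduced, of the expected dimension, with smooth normalization. Since every dominating component of $\cM_0(\cX, e)$ specializes into this single irreducible locus, and since the general member is unobstructed so the fiber dimension matches, monodromy forces all such components to coincide: there is only one. I would finish by transporting irreducibility back from the special fiber to the general fiber $X$ via the smoothness (flatness) of $\cM_0(\cX, e)\to B$ at general points and the fact that a component dominating $B$ is determined by its general fiber over $0$.

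The hard part will be the forcing argument that $e_2=0$: one must show cleanly that no component dominating $B$ can have a limit with a genuine $X_2$-part. The delicate points are (a) ruling out that a positive-degree $C_2$ is compensated by \emph{extra} secancy conditions on $C_1$ in a way that keeps the total dimension maximal, and (b) controlling the rulings $R_i\subset\tilde Y$ that appear as connecting components when $C_1\cap C_2$ lands on $E$ --- these are exactly the transitional curves analyzed in the constructions of Theorem \ref{balanced-high}, and I would reuse that local normal-bundle bookkeeping to show such configurations live in strictly lower dimension. The balancedness of the normal bundle, through Corollary \ref{curve-thru-point-cor}, is what makes the count tight enough to exclude $e_2>0$ in the range $e<d$.
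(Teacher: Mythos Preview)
Your overall strategy matches the paper's: degenerate to a 2-fan $X_1\cup X_2$ of type $(d,d-1)$, force the limit curve entirely into $X_1$, and then invoke Theorem~\ref{secant-thm}. The gap is in the step you yourself flag as hard --- forcing $e_2=0$ --- and the argument you actually propose for it is incorrect.

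You claim a direct dimension count on the special fibre shows that the locus with $e_2>0$ has strictly smaller dimension than $D=e(n+1-d)+n-4$. This is false already for $e_2=m=1$. A curve $C_1\subset X_1$ of type $(e,1)$ projects to a degree-$(e-1)$ curve in $\P^{n-1}$ which is $a$-secant to $Y$ with $a=(e-1)(d-1)-1$; by Theorem~\ref{secant-thm} such $C_1$ vary in a family of dimension $en-3-(e-1)(d-1)=D-(n-d)$. The single point $C_1\cap E_X$ is then determined, and lines on $X_2$ through that point form an $(n-d)$-dimensional family. The glued curve $C_1\cup_p C_2$ is a genus-$0$ predeformable stable map, and the total count is exactly $D$. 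So the $m=1$ stratum sits in the special fibre with the same top dimension as the $m=0$ stratum, and nothing prevents a dominating component from limiting there. Your count does not separate them.

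The paper's mechanism is the one you only gesture at: one first uses monodromy-interchangeability (from \cite{glp}) to know that \emph{every} component $S$ has a general member with balanced normal bundle, then \emph{pins} the curve at $a_++1$ general points and specializes those marked points into $X_1$. The limit $C_1$ must carry all $a_++1$ points, and by upper-semicontinuity the pinned curves must still fill up a variety of local dimension $\geq r_++1$ at each marked point. For $C_1$ of type $(e,m)$ through $a_++1$ general points of $X_1$, the family has dimension at most $r_+-m(n-d)$, so the swept locus has dimension $\leq r_+-m(n-d)+1<r_++1$ whenever $m>0$. This pinned inequality is what kills $m>0$; the unpinned count does not. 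Note also that the monodromy input is needed at the \emph{start} (so that balancedness and hence the pinning bound apply to the arbitrary component $S$), not merely at the end as you have it.
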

\begin{rem}\label{referee}For $d\leq n-2$ and $e$ arbitrary, 
this has been proven by Riedl-Yang \cite{riedl-yang} based on bend-and-break. 
For $d=n-1$ some results for low $e$ were obtained by Tseng \cite{tseng-note}.
In fact, the referee notes that the Theorem also follows fully from the older work of
Harris-Roth-Starr \cite{harris-roth-starr}. The following
argument is due to him: \cite{harris-roth-starr} shows in the
given range that the locus of $e$-fold covered lines is irredcuible
and contained in a unique component of the closure of the locus of rational
curves; this plus monodromy-interchangeability (see below) imply that
there can be only one such component, QED.\end{rem}
\begin{proof}
To begin with, it follows from the result of \cite{glp} that the components
of the family of rational curves of degree $e$ on a general hypersurface $X$
of degree $d>e$ are monodromy-interchangeable. Let $S$ be one
of those components. 
By Lemma \ref{balanced-prop}, the general curve $C'$ in $S$ has balanced normal bundle
and hence by Corollary \ref{curve-thru-point-cor}, notations being 
as in \eqref{balanced-normal}, may be assumed to pass through 
a collection $(q'_\bullet)$ of  $a_++1$ general points while filling up an $r_++1$-dimensional
variety for each fixed $(q'_\bullet)$, provided $r_+<n-2$. In the exceptional case $r_+=n-2=\rk(N_{C/X})$,
we take $a_++2$ points instead, leading to a 1-dimensional variety, i.e. the curve $C'$ itself.\par 

Consider a degeneration of $X$ to a general hypersurface $X_1\cup X_2$ of bidegree $(d, d-1)$ 
on a 2-fan and with it a degeneration of $q'_\bullet$ to general points $(q_\bullet)$ on $X_1$ (only!).
Here $X_1$ is the blowup of a general quasi-cone of degree $d$
 at its vertex, 
$X_2$ is a general degree-$(d-1)$ hypersurface and $E=X_1\cap X_2$
is the exceptional divisor on $X_1$ and
 a hyperplane section of $X_2$, hence a degree-$(d-1)$ hypersurface in $\P^{n-1}$. 
As we have seen, $X_1$ can also be represented as the blowup of a general
$(d, d-1)$ complete intersection $Y=F_{d-1}\cap F_d$ in $\P^{n-1}$ with exceptional divisor $\tilde Y$ and 
$E$ being the proper transform of $F_{d-1}$.\par
Let $(C, f)$ be the Kontsevich 
 limit on $X_1\cup X_2$ of a general curve in $S$, considered as a 
stable (unpointed) map. This is the special fibre of a 1-parameter family
\eqspl{}{
\begin{matrix}
\mathcal C&\ \ &\to&\ \  &\mathcal X\\
&\searrow&&\swarrow&\\
&&B&&
\end{matrix}
}
As we have seen in Prop.\ref{no-E-prop}, no component of $C$ can map non-constantly to 
$E=X_1\cap X_2$. 
Let $C_i$ be the sum of the components of $C$ mapping non-constantly to $X_i$ (but not to $E$),
$i=1,2$ and let $m$ be the degree of $f_*(C_2)$, i.e. 
\[m=f_*(C_2).E=C_2.f^*(X_1)=-C_2.f^*(X_2).\]
Likewise,
\[m=f_*(C_1).E=C_1.f^*(X_2)=-C_1.f^*(X_1)\]
(note that $f^*(X_1), f^*(X_2)$ have degree 0 on any fibre component
mapping to a point).We call $(e, m)$ the \emph{type} of $f_*(C_1)$ (or, abusively, of $C_1$).
 Then $f_*(C_1)$ has $H$-degree $e-m$ and intersection number
$a=(e-m)(d-1)-m$ with $\tilde Y$ and
by upper semicontinuity,  the family of $C_1$s going through $(q_\bullet)$
fills up a variety that has dimension $r_++1$ or more- though possibly singular or non-reduced-
 locally at each $q_i$ ,
 because this is true for the curves in $X$
through $(q_\bullet)$ (see Corollary \ref{curve-thru-point-cor}). \par

Now I claim that $C_1$ is irreducible except for
contracted components. 
Let $C_{1, 1}, ..., C_{1,k}$ be the connected components of $C_1$, let $(e_i, m_i)$ be the type of $C_{1, i}$,
and let $b_i$ be the number of points $q_j$ on $C_{1, i}$. Thus,
\[\sum b_i=a_++1=\lfloor\frac{e(n-d+1)-2}{n-2}\rfloor, e(n-d+1)-2=a_+(n-2)+r_+.\]
By Proposition \ref{no-E-prop}, no $C_{1, i}$ can map nonconstantly to 
$E$ and by Lemma \ref{inf-near-sec-lem}, we have $m_i\geq 0, \forall i$
(even if $C_{1,i}$ maps to $\tilde Y$).

Note that the degree of the secant bundle $N^s_{C_{1,i}}=N_{C_{1,i}/X}$
is $c_i=e_i(n-d+1)+m_i-2$ which we write as above in the form
\[a_{+i}(n-2)+r_{+i}, \ \ 0<r_{i+}\leq n-2,\]
Thus if $N^s_{C_{1,i}}$ is balanced then it has the form $r_{+i}\O(a_{+i}+1)\oplus 
(n-2-r_{+i})\O(a_{+i})$ but in any case 
any family of curves of type $(e_i, m_i)$ can go through at most $a_{+i}+1$
general points. 
Note that $c_i>0$: indeed the only case where $c_i=0$
is where $e_i=1, d=n-1, m=0$ and this would violate connectedness of $C$.
Also,
\[\sum c_i\leq e(n-d+1)-2.\]
\par

Now we assume $r_+<n-2$ as the case $r_+=n-2$ is similar and simpler.
Then since $\sum b_i=a_++1$, we have
\[(n-2)\sum b_i>e(n-d+1)-2\]
Therefore we may assume $b_1\geq a_{+1}+1$. But since $C_{1,1}$
goes through $b_1$ general points this forces $b_1=a_{+1}+1$.
Now suppose $k>1$. If $b_1=a_+$ (i.e. all the $q$ points go to $C_{1,1}$),
then we get $r_{+1}<r_+$ which is a contradiction
since as noted above $r_++1$ is the local dimension of the 
variety filled up by the curves going through the $q$ points.
Therefore we have $b_1<a_+$ so we may assume $b_2>0$.
But then freeing up a $q$ point on $C_{1,2}$ has no (dimension-raising) effect on the variety
locally filled up by the curves at some $q$ point on $C_{1,1}$, unlike the situation
on the general fibre, which is a contradiction.
\par
Now a similar dimension argument shows that we must have $m=0$. 
Then by lemma \ref{free-rational-lem}, $f$ is an embedding if $n\geq 4$.
Then we can invoke Theorem \ref{secant-thm} to conclude that the limit family is unique.
Since the limit occurs with multiplcity 1, it follows that
the family of curves of degree $e$ on $X$ is irreducible.
\end{proof}
\begin{rem}[speculative]
In the range $e\geq d$ not covered by our argument, The dimension of any component
$S$ of the family of rational curves is at least $d(n+1-d)+n-4$, which is $>2(n-1)$
provided $d$ is at least, roughly, $3n/4$ (the most interesting part
of the $d$ range is near $n-1$). Thus in this range bend-and-break is
applicable to conclude that $S$ contains some reducible curves. Then, some of the simpler
arguments from \cite{riedl-yang}, avoiding their
'borrowing' argument, might conceivably be used to conclude irreducibility,
but we have no proof.
\end{rem}
\section{Curves through a general point}\label{through-pt}
Let us denote by $R_e(X)$ (resp. $R_e(X, p)$) the set of rational curves of degree $e$
in $X$ (resp.  in $X$ and through the point $p\in X$). Here we show that under some circumstances
it is possible to deduce irreducibility of $R_e(X, p)$ for general $p$ from that of 
$R_e(X)$. As the referee notes, these results also follow from \cite{harris-roth-starr}
as in Remark \ref{referee}. 
\begin{prop}
Notations as above, assume $n\geq 4$ and
 $d\leq n-2$. Then $R_e(X, p)$ is irreducible
for general $p$.
\end{prop}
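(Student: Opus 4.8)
The plan is to deduce the irreducibility of $R_e(X,p)$ from that of $R_e(X)$ (Theorem \ref{main}) by studying the evaluation map from the space of pointed curves. Let $\mathcal{M}=R_e(X)$, which is irreducible of dimension $e(n+1-d)+n-4$, let $\mathcal{M}_1$ be the corresponding space of curves carrying one marked point, and let $\mathrm{ev}:\mathcal{M}_1\to X$ be the evaluation morphism. Since the forgetful map $\mathcal{M}_1\to\mathcal{M}$ is generically a $\P^1$-bundle, $\mathcal{M}_1$ is irreducible, and for general $p$ the fibre $\mathrm{ev}^{-1}(p)$ is identified with $R_e(X,p)$ (a general $p$ is hit once, transversally, by the general curve through it). Thus it suffices to show that the general fibre of $\mathrm{ev}$ is irreducible of dimension $e(n+1-d)-2>0$.

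First I would record the positivity input. Because $d\le n-2$ we have $n+1-d\ge 3$, so the normal bundle $N_{C/X}$ of a general $C\in\mathcal{M}$, which is balanced of degree $e(n+1-d)-2$ and rank $n-2$ by Proposition \ref{balanced-prop}, has upper integer $a_+\ge 0$; hence $N_{C/X}(-p)$ has all summands of degree $\ge -1$ and $H^1(N_{C/X}(-p))=0$. This says precisely that $\mathrm{ev}$ is smooth at the general point of the general fibre, so that fibre is smooth of the expected dimension. The same vanishing, applied after the fan degeneration of Theorem \ref{main} with the marked point $p$ specialized into the component $X_1$, should show that $\mathrm{ev}$ remains smooth over the complement of a subset of $X$ of codimension at least two: the only points at which it can fail are images of curves that are non-free or fall into the singular strata, and the analysis of Proposition \ref{no-E-prop} together with Lemma \ref{inf-near-sec-lem} confines these to high codimension.

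Second, I would run a Stein-factorization argument against the topology of $X$. Write $\mathrm{ev}=f\circ h$ with $h$ having connected fibres and $f:Z\to X$ finite, $Z$ normal. Over the open set where $\mathrm{ev}$ is smooth the map $f$ is \'etale, so by the previous paragraph $f$ is \'etale away from a codimension-$\ge 2$ locus; by purity of the branch locus (Zariski--Nagata) on the smooth variety $X$, $f$ is then \'etale everywhere. Now $X$ is a smooth hypersurface of dimension $n-1\ge 3$, hence simply connected by the Lefschetz hyperplane theorem, so every finite \'etale cover of $X$ is trivial. Therefore $f$ has degree one, $\mathrm{ev}$ has connected fibres, and the general fibre $R_e(X,p)$ --- being connected and smooth --- is irreducible.

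The main obstacle is the codimension bound in the second paragraph: proving that $\mathrm{ev}$ is smooth, equivalently that $H^1(N_{C/X}(-p))=0$ for a general curve through $p$, not merely for generic $p$ but for $p$ outside a subset of codimension $\ge 2$. This is exactly where the hypothesis $d\le n-2$ (rather than $d\le n-1$) is used, since it forces $a_+\ge 0$ with room to spare and thereby keeps the bad locus small. I would establish it by degenerating to the $2$-fan $X_1\cup X_2$ as in the proof of Theorem \ref{main}, letting $p$ together with the $a_++1$ balancing points of Corollary \ref{curve-thru-point-cor} specialize into $X_1$, and invoking the multisecant-through-a-point statements (the through-a-point clause of Theorem \ref{secant-thm}, and the analogous through-a-point assertion for lines) to see that the limiting secant family passing through $p$ is unobstructed and of the expected dimension, so that the bad locus indeed has codimension at least two.
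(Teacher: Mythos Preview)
Your approach is genuinely different from the paper's and has a real gap at exactly the point you flag as ``the main obstacle.''

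The paper does not use Stein factorization or simple connectedness at all. It argues as follows: by Riedl--Yang (not Theorem~\ref{main}, which only covers $e<d$), $R_e(X)$ is irreducible, so the components of $R_e(X,p)$ are monodromy-interchangeable as $p$ varies. Then $X$ is degenerated to a full $d$-fan $X_d\cup\cdots\cup X_1$ of type $(d,d-1,\ldots,1)$, the point $p$ is placed generically in the bottom piece $X_1$, and one constructs explicitly an irreducible family of $e$-gons $L_1\cup\cdots\cup L_e\subset X_1$ through $p$ with each $L_i\cap E$ lying on the complete intersection $Y=X_1\cap\cdots\cap X_d\cap E$. The hypothesis $d\le n-2$ enters only to guarantee that this $(d,d-1,\ldots,1)$ complete intersection $Y\subset\P^{n-1}$ is irreducible. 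Since this single irreducible limit family sits in a unique component, monodromy-interchangeability forces $R_e(X,p)$ to have one component.

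Your Stein-factorization route would require that $\mathrm{ev}$ is smooth (or at least that the finite part $f$ is \'etale) over the complement of a codimension-$\ge 2$ locus in $X$. Your sketched justification does not establish this: the fan degeneration of Theorem~\ref{main}, Proposition~\ref{no-E-prop}, Lemma~\ref{inf-near-sec-lem}, and the through-a-point clause of Theorem~\ref{secant-thm} all yield information only at a \emph{general} point of $X_1$ (or of $X$). They say nothing about what happens over an arbitrary divisor $D\subset X$, which is precisely what purity demands. Concretely, you would need to show that for every prime divisor $D\subset X$ and general $p\in D$, the fibre $\mathrm{ev}^{-1}(p)$ is generically reduced of the expected dimension; none of the cited results addresses this. (There is also the minor point that $\mathrm{ev}$ must be taken on the Kontsevich compactification to be proper, so you must control boundary strata as well.)

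As a smaller matter: you invoke Theorem~\ref{main} for the irreducibility of $R_e(X)$, but that theorem requires $e<d$, whereas the present proposition carries no hypothesis on $e$; the paper cites Riedl--Yang here, which covers all $e$ when $d\le n-2$.
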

\begin{proof}By the theorem of Riedl-Yang
\cite{riedl-yang}, $R_e(X)$ is irreducible, hence 
 the components of $R_e(X, p)$ for general $p$ are monodromy-interchangeable
as $p$ ranges over $X$. Now we degenerate $X$ to a general $d$-fan
of type $(d, d-1,...,1)$, which is
 of the form $X(0)=X_d\cup X_{d-1}\cup...\cup
X_1$ where each $X_k$ is a blown up $k$-quasi-cone. Pick a general point $p\in X_1$
and consider the family of $e$-gons in $X_1$ of the form
$C=L_1\cup ...\cup L_e$, where $p\in L_1$, where the $L_i$ are lines and $L_i$ meets  $L_{i+1}, i<e$.
We may consider $Y=X_1\cap X_2\cap...\cap X_d\cap E$ as a general  $(d, d-1,...,1)$
complete intersection in $E=\P^{n-1}$. When $d\leq n-2$, $Y$ is irreducible.
Choosing $L_i$ so that $L_i\cap E\in Y$, $C$ may be extended 'trivially'
by attaching rulings, to a smoothable curve in $X(0)$. This leads to an irreducible
unique family of $e$-gons contained in a unique component of $F_e(X, p)$. By the monodromy-
interchangeability, it follows that $R_e(X, p)$ itself is irreducible.
\end{proof}
The argument is partly extendable to the case $d=n-1$:
\begin{prop}
Notations as above, assume $d=n-1\geq 3$ and $e<d$ . 
Then $R_e(X, p)$ is irreducible for general $p$.
\end{prop}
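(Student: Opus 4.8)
**The plan is to adapt the argument of the preceding Proposition (the $d\leq n-2$ case) to the critical case $d=n-1$, handling the essential difficulty that the complete intersection $Y$ in $E=\P^{n-1}$ may now fail to be irreducible.**

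First I would set up the same degeneration as before: specialize $X$ to a general $d$-fan of type $(d,d-1,\dots,1)$, written $X(0)=X_d\cup X_{d-1}\cup\dots\cup X_1$, place the general point $p$ on the bottom component $X_1$, and consider $e$-gons $C=L_1\cup\dots\cup L_e$ with $p\in L_1$ and consecutive lines meeting. As before, monodromy-interchangeability of the components of $R_e(X,p)$ follows from irreducibility of $R_e(X)$, which in the range $d=n-1$, $e<d$ is supplied by Theorem \ref{main} (this is exactly where the hypothesis $e<d$ is used, since Theorem \ref{main} requires it). So it again suffices to exhibit a \emph{single} irreducible family of suitable $e$-gons through $p$ that is contained in a unique component.

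The key geometric point is that $Y=X_1\cap\dots\cap X_d\cap E$ is now a $(d,d-1,\dots,1)$ complete intersection in $\P^{n-1}$ with $d=n-1$, which is a \emph{finite} set of reduced points rather than a positive-dimensional irreducible variety; so the clean irreducibility argument of the previous Proposition breaks down. My strategy would be to arrange the $e$-gon so that its lines $L_i$ are genuine $a_i$-secants of $Y$ (with $a_i\le d-1$ on each line) and to invoke Theorem \ref{secant-thm} to control the family. Since $e<d=n-1$, we are in the admissible range $e\le d-1$, $d\le n-1$ of that theorem, so the locus $\Sec^e_a(Y)$ of $a$-secant rational curves of degree $e$ is irreducible and smoothable, and its general member passes through a general point of $\P^{n-1}$. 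The $e$-gon on $X_1$ is then extended trivially by attaching rulings to give a smoothable curve in $X(0)$, exactly as in the $d\le n-2$ argument, and the irreducibility of the secant family forces the $e$-gons through $p$ into a unique component of $R_e(X,p)$.

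\textbf{The main obstacle} I anticipate is precisely the zero-dimensionality of $Y$: when $Y$ is a finite point-set, imposing that the broken line meet $Y$ no longer cuts out an irreducible incidence variety in the naive way, and one must instead work on the blowup of $\P^{n-1}$ in $Y$ (identifying $X_1$ with that blowup) and phrase the secancy conditions in terms of the secant sheaf $N^s_C$, as in Theorem \ref{secant-thm}. I would need to check that a general $e$-gon through $p$ realizing the correct multisecant type is unobstructed as a secant—i.e. that $H^1(N^s_C)=0$—which should follow from Lemma \ref{secant-balanced-lem} together with the chain-balancing Lemma \ref{balanced-lem}, and that the point $p$ can indeed be taken general on $X_1$ via the ``passes through a general point'' clause of Theorem \ref{secant-thm}. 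Verifying that no unwanted limit components arise—and in particular that the specialization does not let the curve spill nontrivially onto higher fan components in a way that produces extra families—will require the no-escape control of Proposition \ref{no-E-prop} and a dimension count analogous to the one in the proof of Theorem \ref{main}; this bookkeeping, rather than any single hard estimate, is where the real work lies.
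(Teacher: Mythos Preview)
Your opening move—using Theorem \ref{main} to get monodromy-interchangeability of the components of $R_e(X,p)$, and then aiming to exhibit one irreducible family that limits to a unique component—matches the paper exactly, and you correctly flag that this is the only place $e<d$ is used. But the degeneration you choose does not work, and your appeal to Theorem \ref{secant-thm} is a misfire.

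The problem is precisely the one you identify and then try to talk your way around: in the full $d$-fan with $d=n-1$, the locus $Y$ is zero-dimensional, and Theorem \ref{secant-thm} says nothing about that situation. That theorem concerns $a$-secant rational curves to a \emph{codimension-two} $(d,d-1)$ complete intersection in $\P^{n-1}$; its $Y$ has dimension $n-3\geq 1$, and the irreducibility proof there hinges on $Y$ being positive-dimensional. Your $Y$ is a set of $(n-1)!$ reduced points. For such $Y$, the lines through $p$ meeting $Y$ form a finite set (one line per point of $Y$), so already the first link $L_1$ of your $e$-gon ranges over a disconnected parameter space, and the family of $e$-gons through $p$ with the required incidence breaks into many pieces. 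Rephrasing things via the secant sheaf $N^s_C$ on a blowup does not repair this: unobstructedness tells you each piece has the right dimension, not that the pieces coalesce. You would need an additional monodromy argument on the finite set $Y$, and nothing in the paper supplies one.

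The paper's remedy is to \emph{not} go to the full fan: it degenerates only to $X_d\cup\cdots\cup X_3\cup X'_2$ with $X'_2$ a quadric, so that $Y=E\cap X_d\cap\cdots\cap X'_2$ is a $(d,d-1,\ldots,2)$ complete intersection in $\P^{n-1}$, which for $d=n-1$ has dimension $1$ and is an irreducible curve. The curves used are then conics and twisted cubics in the quadric $X'_2$ (not lines in $X_1$), extended trivially by rulings. Conics $M\cap X'_2$ with $M=\langle q_1,q_2,p\rangle$, $q_1,q_2\in Y$, visibly form an irreducible family, settling $e=2$. For $e=3$, a genuine difficulty arises: the quadric surface $\langle q_1,q_2,q_3,p\rangle\cap X'_2$ carries \emph{two} pencils of twisted cubics through the four points, and the paper needs a further specialization (to a quadric cone) together with a normal-bundle computation to show these lie in a single component. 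General $e$ is then handled by chains of conics, possibly with one twisted cubic. None of this is captured by your proposed $e$-gon-plus-Theorem-\ref{secant-thm} route.
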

\begin{proof}
We argue similarly, now using Theorem \ref{main}, in lieu of \cite{riedl-yang} initially
to conclude that the components of $R_e(X, p)$ are monodromy-interchangeable 
(this is the only place where the assumption $e<d$ is used).
Only now we degenerate $X$ to $X_d\cup...\cup X_3\cup X'_2$ where $X'_2$ is 
a quadric. We will prove irreducibility for $e=2, 3$ using conics and twisted cubics
in $X'_2$ extended 'trivially' via rulings in $\tilde Y$. Then the general case follows
similarly by using chains of conics or chains consisting of conics plus
one twisted cubic in $X'_2$ extended via rulings.
Note that in this case $Y=E\cap X_d\cap...\cap X'_2$ is an irreducible curve. 
The conics in question are of the form $M\cap X'_2$ where $M$ is a $\P^2$
spanned by $q_1, q_2\in Y$ and $p\in X'_2\setminus E$ and the set of such, for fixed $p$,
is obviously irreducible. This settles the case $e=2$.\par
The case of twisted cubics, i.e. $e=3$,
 is harder because for fixed, general $q_1, q_2, q_3\in Y$
and $M=\carets {q_1, q_2, q_3, p}$, $M\cap X'_2$ is a smooth quadric surface which carries
\emph{two} 1-parameter families of twisted cubics through the 4 points, each complementary
or 'linked' to a family of lines (this already shows that $R_3(X, p)$
has at most 2 components and then using chains with one cubic and some conics
it follows that the same is true for $R_e(X, p)$ for all $e\geq 3$ odd). 
We are however claiming that $R_3(X, p)$ and then $R_e(X, p)$ has
just one component.
To this end, specialize $q_\bullet$ to $q_\bullet^0$ where $q_\bullet^0\in T\cap Y$
where $T$ is the tangent hyperplane to $X'_2$ at $p$. In this special case 
$M^0\cap X'_2=\carets{q^0_\bullet}\cap X'_2$
is a quadric cone with vertex $p$, which carries a unique family of twisted cubics $C$
through $p, q^0_\bullet$ linked to lines through the vertex $p$. 
If the locus of twisted cubics in $X'_2$ trisecant to $Y$ is reducible, it would be
singular at $[C]$. The Zariski tangent space ot this locus is $H^0$ of 
the secant bundle $N^s_C(-p)$,
where $N_C$ is the normal bundle in $X'_2$, which has degree $3n-5$, and $N^s_C$ is
the elementary modification of $N_C$ corresponding to the tangent spaces
$T_{q^0_i}Y, i=1,2,3$.\par
Now it is easy to check that for a twisted cubic $C$ contained is a smooth quadric surface $Q$
inside $X'_2$ the normal bundle $N_{C/X'_2}$
is $\O(4, 3^{n-3})$ with $\O(4)=N_{C/Q}$, $\O(3^{n-3})=N_{Q, X'_2}|_C$,
while for $C$ in a quadric cone surface the normal bundle becomes $\O(5, 3^{n-4}, 2)$
(which shows that $C$ is a singular point in the space of twisted cubics through a \emph{fixed}
quadruple $(q^0_\bullet, p)$, because twisting down by these points yields a (-2)
quotient).
Now because the tangent $T_{q^0_i}$ can be chosen generally, the secant bundle $N^s_C$
is a general elementary modification of $N_{C/X'_2}$ of colength $n-2$
locally at each of the  $q^0_i, i=1,2,3$ 
so $N^s_C$ is of type $\O(2,1,1,0^{n-5})$, $N^s_C(-p)$ is of type
$\O(1, 0,0, (-1)^{n-5})$, and consequently
$H^1(N^s_C(-p))=0$, which makes $C$ a nonsingular point in the space of
twisted cubics trough $p$ trisecant to $Y$ (allowing $q^0_\bullet$ to move on $Y$), contradiction.
\end{proof}

\bibliographystyle{amsplain}
\bibliography{../mybib}
\end{document}